	\setlist[enumerate]{label*=\arabic*.}
	\titleformat{\section}[block]{\Large\bfseries\filcenter}{\thesection}{1em}{}
\numberwithin{equation}{section}
\theoremstyle{plain}
\newtheorem{teo}[equation]{Theorem}
\newtheorem*{teo*}{Theorem}
\newtheorem{lema}[equation]{Lemma}
\newtheorem{prop}[equation]{Proposition}
\theoremstyle{definition}
\newtheorem{remark}[equation]{Remark}
\newcommand{\thistheoremname}{}
\newtheorem{genericthm}[equation]{\thistheoremname}
\newcommand{\thistheoremnames}{}
\newtheorem*{genericthms}{\thistheoremnames}
\newenvironment{para*}[1]
  {\renewcommand{\thistheoremnames}{#1}%
   \begin{genericthms}}
  {\end{genericthms}}
\let\expandafter\oldproof\csname\string\proof\endcsname
\let\oldendproof\endproof
\renewenvironment{proof}[1][\proofname]{%
  \oldproof[\upshape \bfseries #1:]%
}{\oldendproof}
\def \a{\alpha}
\def \R {\mathbb{R}}
\def \C{\mathbb{C}}
\def \N{\mathbb{N}}
\def \e{\varepsilon}
\def \p{\partial}
\def \d{\,\textup{d}}
\def \mc{\mathcal}
\def \eu{\EuScript}
\def \diag{\textup{diag}}
\def \hs{\hspace{0.5cm}}
\def \tp{\textup}
\newcommand*{\ou}[2]{\overset{\text{\large ${#1}$}}{#2}}
\def\Xint#1{\mathchoice
{\XXint\displaystyle\textstyle{#1}}%
{\XXint\textstyle\scriptstyle{#1}}%
{\XXint\scriptstyle\scriptscriptstyle{#1}}%
{\XXint\scriptscriptstyle\scriptscriptstyle{#1}}%
\!\int}
\def\XXint#1#2#3{{\setbox0=\hbox{$#1{#2#3}{\int}$ }
\vcenter{\hbox{$#2#3$ }}\kern-.6\wd0}}
\def\dashint{\Xint-}
\begin{document}

\title{\LARGE \textbf{Extremal rank-one convex integrands\\ and a conjecture of \v Sverák}}

\author{\Large André Guerra } 

\affil{\small University of Oxford, Andrew Wiles Building
 Woodstock Rd,
Oxford OX2 6GG,
United Kingdom  \protect\\
  {\tt{guerra@maths.ox.ac.uk}}}

\date{}

\maketitle

\begin{abstract} 
We show that, in order to decide whether a given probability measure is laminate, it is enough to verify Jensen's inequality in the class of extremal non-negative rank-one convex integrands.
We  also identify a subclass of these extremal  integrands, consisting of truncated minors, thus proving a conjecture made by \textsc{\v Sverák} in (\textit{Arch. Ration. Mech. Anal. 119 293-300, 1992}).
\end{abstract}


\section{Introduction}

Since its introduction in the seminal work  of \textsc{Morrey} \cite{Morrey1952},
quasiconvexity has played an important role not just in
the Calculus of Variations \cite{Chen2017, Evans1986, Iwaniec1994, Rindler2018} but also in problems from other areas of
mathematical analysis, for instance in the theory of compensated
compactness  \cite{Murat1978, Tartar1979}.
Nonetheless, this concept is still poorly
understood and has been mostly studied in relation with
polyconvexity and rank-one convexity, which are respectively stronger
and weaker notions that are easier to deal with (we refer the reader
to Section \ref{sec:prelims} for terminology and notation).
An outstanding open problem in the area is Morrey's problem,
which is the problem of deciding whether rank-one convexity implies
quasiconvexity, so that the two notions coincide. 
A fundamental example \cite{Sverak1992a} of \textsc{\v Sverák} shows that this implication does not hold in dimensions $3\times
2$ or higher and, more recently, \textsc{Grabovsky}
\cite{Grabovsky2018} has found a different
example in dimensions $8\times 2$ which moreover is $2$-homogeneous. 
The problem in dimensions $2\times 2$, in
particular, remains completely open,
but in the last two decades evidence 
towards a positive answer in this case has been piling up \cite{Astala2012, Faraco2008,
  Harris2018,Kirchheim2016, Kirchheim2008, Muller1999a,
  Pedregal1996,Pedregal1998,Sebestyen2017}.

A presumably easier (but still unsolved) version of Morrey's problem in dimensions $2\times 2$ is to
decide whether rank-one convex integrands\footnote{We will refer to real-valued functions defined on a matrix space as \emph{integrands}; this terminology is standard in the Calculus of Variations literature.} in the space of $2\times 2$
symmetric matrices are quasiconvex. In this direction,
\textsc{\v Sverák} introduced in \cite{Sverak1992} new quasiconvex
integrands, which were later generalized in \cite{Faraco2003}. 
For any $n\times n$ symmetric matrix $A$, these integrands are
defined by
$$
F_k(A)\equiv \begin{cases}
|\det A| & A\textup{ has index } k\\
0 & \textup{otherwise}
\end{cases}
$$
for $k=0,\dots, n$; we recall that the \emph{index} of a matrix is the number of its negative eigenvalues. We also note that the integrand $F_0$ is sometimes called $\det^{++}$ in the literature, since its support is the set of positive definite matrices. These integrands have played an important role in studying other problems related to the Calculus of Variations, for instance in building counterexamples to the regularity of elliptic systems \cite{Muller2003} or in the computation of rank-one convex hulls of compact sets \cite{Szekelyhidi2005}. 

In order to understand \textsc{\v Sverák}'s motivation for considering these integrands it is worth making a small excursion into classical convex analysis. Given a real vector space $\mathbb{V}$ and a convex set $K\subset \mathbb{V}$, one can define the set of \emph{extreme points} of $K$ as the set of points which are not contained in any open line segment contained in $K$.  In general the set of extreme points might be very small: this is what happens, for instance, when the set is a convex cone $C\subset \mathbb{V}$, since in this situation all non-zero vectors  are contained in a ray through zero. However, if we can find a convex base $B$ for $C$, then we note that such a ray corresponds  to a unique point in $B$.  If this is an extreme point of $B$ then we say that we have an \emph{extremal ray}.

We are interested in the extremal rays of the cone $C$ of rank-one convex integrands. This cone has the inconvenient feature that it is not \emph{line-free}: there is a set of elements $v\in \mathbb{V}$ such that, for any $c\in C$ and any $t\in \R$, the point $c+tv$ is in $C$; this set is precisely $C\cap(-C)$. In turn, it is quite clear that this is the set of rank-one affine integrands. A reasonable way of disposing of rank-one affine integrands is by demanding non-negativity from  all integrands from $C$. This leads us to the definition of extremality considered by 
\textsc{\v Sverák}: we say that a non-negative rank-one convex integrand $F$ is \emph{extremal}	
if, whenever we have $F=E_1+E_2$ for $E_1, E_2$ non-negative rank-one convex integrands, 
then each $E_i$ is a non-negative multiple of $F$.
A weaker notion of extremality was introduced by \textsc{Milton} in
\cite{Milton1990}
 for the case of quadratic forms (see also \cite{Harutyunyan2017}) but
 we shall not  discuss it further here.

Let us now  explain the relation between  \textsc{\v Sverák}'s integrands and extremality. In \cite{Sverak1992} \textsc{\v Sverák} observed that the 
polyconvex integrand 
$|\det|\colon \R^{n\times n} \to [0,\infty)$ is not extremal, since
$|\det|=\det^++\det^-$, where as usual
$$\textup{det}^+\equiv \max(0,\det), \hs
\textup{det}^-\equiv \max(0,-\det),
$$
which are also polyconvex.
He  also observed that $\det^\pm$ are not extremal in $\R^{n\times n}_{\textup{sym}}$, since
\begin{equation*}
\textup{det}^+= \sum_{k \textup{ is even}} F_k \textup{ and } 
\textup{det}^-= \sum_{k \textup{ is odd}} F_k\hs
 \textup{
  in } \R^{n\times n}_{\textup{sym}}.\label{eq:3}
\end{equation*}
However, he conjectured that $\det^\pm$ are extremal in $\R^{n\times n}$ and also that each $F_k$ is extremal in $\R^{n\times n}_\textup{sym}$.
In this paper we  give an affirmative answer to both
conjectures\footnote{In fact, \textsc{\v Sverák} only conjectures
  extremality in the cone of quasiconvex integrands, so our results are
  in this sense slightly stronger than his conjecture.}:

\begin{teo}\label{teo:intro}
Given a minor $M\colon \R^{n\times n}\to \R$, let $M^\pm$ be its positive and negative parts. Then $M^\pm$ are extremal non-negative rank-one convex integrands in $\R^{n\times n}$.

For  $k=0,\dots,n$, the integrands
 $F_k\colon \R^{n\times n}_{\textup{sym}}\to \R$, are extremal non-negative
 rank-one convex integrands in $\R^{n\times n}_{\textup{sym}}$.
\end{teo}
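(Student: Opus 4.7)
The plan is a unified argument for both statements of the theorem. Let $F$ denote any of the integrands in question---$M^\pm$ on $V=\R^{n\times n}$, or $F_k$ on $V=\R^{n\times n}_{\tp{sym}}$---and suppose we have a decomposition $F=E_1+E_2$ with $E_1,E_2\ge 0$ rank-one convex. The aim is to show $E_i=\lambda_i F$ for some non-negative constants $\lambda_i$. Non-negativity immediately forces $E_1=E_2=0$ on the closed set $\{F=0\}$, and the real work is to analyse the $E_i$ on the open set $\Omega:=\{F>0\}$.

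The technical heart is a one-dimensional analysis along rank-one lines, resting on the classical fact that any minor $M$ is affine when restricted to a rank-one line, $M(A+tB)=\alpha+\beta t$. When $F=M^\pm$ this immediately gives $F(A+tB)=c(t-t^\ast)^+$ for some $c\ge 0$, $t^\ast\in\R$. For $F=F_k$ I would combine this with the fact that adding $t\,uu^T$ can only decrease the index of a symmetric matrix as $t$ grows (Weyl interlacing), so the single zero of the affine function $\det(A+tuu^T)$ is precisely the point where the index jumps from $k$ to $k-1$; a brief sign analysis then shows that the restriction of $F_k$ to a generic rank-one line through $\Omega$ is again of the form $c(t-t^\ast)^+$, up to the orientation of $t$. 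Restricting $E_1,E_2$ to such a line yields non-negative convex functions $f_i$ with $f_1+f_2=c(t-t^\ast)^+$; both vanish for $t<t^\ast$, while on $t>t^\ast$ their sum is affine, forcing each $f_i$ (convex, with affine complement therefore concave) to be affine itself, and continuity at $t^\ast$ then pins them down as $f_i=\mu_i c(t-t^\ast)^+$ for some $\mu_i\ge 0$. In particular, the ratio $\lambda_i:=E_i/F$ is constant along the intersection of $\Omega$ with every rank-one line whose direction $B$ satisfies the generic condition $\beta\ne 0$.

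To globalise, I would exploit regularity. The bound $0\le E_i\le F$ combined with rank-one convexity implies that $E_i$ is locally Lipschitz on $V$ (a standard property of rank-one convex functions that are locally bounded above); since $F$ is smooth and positive on $\Omega$, so is $\lambda_i$. By Rademacher's theorem $\lambda_i$ is differentiable a.e.; at any differentiable point, $\n\lambda_i\cdot B=0$ for every rank-one $B$ with $\beta\ne 0$, hence by linearity and density of these directions, for every rank-one $B$; as rank-one matrices span $V$ in both the full and the symmetric case, $\n\lambda_i=0$ a.e. A locally Lipschitz function with a.e.\ vanishing gradient is locally constant, so it suffices to observe that $\Omega$ is connected in each case: for $F=M^\pm$ it is a connected component of the complement of the irreducible hypersurface $\{M=0\}$, and for $F=F_k$ it is the orbit of non-degenerate symmetric bilinear forms of signature $(n-k,k)$ under the congruence action of $GL(n,\R)$, which is a connected homogeneous space. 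Thus $\lambda_i$ is a constant and $E_i=\lambda_i F$.

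The main obstacle is this globalisation step. The one-dimensional analysis is elementary once the structure of $F$ along rank-one lines is unravelled, but passing from ``constant along generic rank-one lines'' to ``constant on $\Omega$'' requires rank-one spanning, a regularity ingredient (Rademacher plus the Lipschitz bound coming from rank-one convexity), and connectedness of $\Omega$. The most delicate computational input is the piecewise structure of $F_k$ along a rank-one line, since the analysis requires both the sign of $\beta$ and the monotonicity of the index under PSD perturbations.
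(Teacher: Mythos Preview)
Your approach is correct and genuinely different from the paper's. Both arguments start from the observation that $E_i=0$ on $\{F=0\}$ and that on $\Omega=\{F>0\}$ each $E_i$ is rank-one affine (convex with affine sum). From there the paper invokes a localized Ball--Currie--Olver classification: on a connected open set, every rank-one affine integrand is an affine combination of minors, so $E_i=c_i+v_i\cdot\mathbf{M}(A)$ on $\overline\Omega$, and a somewhat intricate coefficient-killing argument (evaluating on matrices where a single $s$-minor survives, and comparing degrees in $t$ along rays) shows that only the coefficient of $M$ survives. For $F_k$ the paper additionally proves and uses a homogeneity lemma (if $E_1+E_2$ is positively $n$-homogeneous and each $E_i$ is rank-one convex, nonpositive on $\{\det=0\}$, then each $E_i$ is positively $n$-homogeneous), which immediately collapses the minor-expansion to $\alpha_i\det$. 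You bypass both ingredients by exploiting the boundary: along a generic rank-one line $F$ has the shape $c(t-t^\ast)^+$, and the one-dimensional constraint (convex, nonnegative, affine past $t^\ast$, zero at $t^\ast$) forces $E_i=\mu_i\,c(t-t^\ast)^+$ on the whole line, so $\lambda_i=E_i/F$ is constant along every rank-one line in $\Omega$; Lipschitz regularity plus Rademacher plus rank-one spanning then gives global constancy. Your route is shorter and self-contained; the paper's is more algebraic and yields the intermediate structural fact that $E_i$ is a polynomial in the minors.

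Two points to tighten. First, in the $F_k$ case your index statement is not quite right: the jump at $t^\ast$ is $k\to k-1$ only when $t^\ast>0$; when $t^\ast<0$ it is $k+1\to k$ (and when $u^TA^{-1}u=0$ there is no jump and $F_k$ is constant along the line). In every case the restriction is still $c(t-t^\ast)^+$ up to orientation, so the conclusion stands, but the description should be corrected. Second, irreducibility of $\{M=0\}$ does not by itself imply that $\{M>0\}$ is connected; the clean argument (as in the paper) is $\{M>0\}\cong GL_s^+(\R)\times\R^{n^2-s^2}$. Your homogeneous-space argument for $\mc{O}_k$ is fine once you note that $O(n-k,k)$ meets both components of $GL(n,\R)$.
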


As a main tool we use the fact that, on a connected open set, a rank-one affine integrand is an affine combination of minors; this is proved by localizing the arguments from
\textsc{Ball--Currie--Olver} \cite{Ball1981} concerning the classification of Null Lagrangians.

The importance of extreme points in convex analysis has to do with
the Krein--Milman theorem, which states that 
the closed convex hull of the set of extreme points of a compact,
convex subset $K$ of a locally convex vector space 
is the whole set $K$---informally, this means that the set of extreme points is a set
of ``minimal information'' needed to recover $K$.
However, \textsc{Klee} \cite{Klee1959} showed that Krein--Milman theorem is generically true for trivial reasons: if we fix an infinite-dimensional Banach space and we consider the space of its compact, convex subsets (which we can equip with the Hausdorff distance so that it becomes a complete metric space), then 
for almost every compact convex set $K$ its extreme points are dense in $K$. Here we mean ``almost every'' in the sense that the previous statement is false only in a meagre set. Despite this disconcerting result, the situation can be somewhat remedied with the help of Choquet theory, which roughly states that, under reasonable assumptions, an arbitrary point in $K$ can be represented by a measure carried in the set of its extreme points.
For precise statements and much more information concerning Choquet theory we refer the reader to the lecture notes \cite{Phelps2001} or to the monograph \cite{Lukes2010} and for
Krein--Milman-type theorems for semi-convexity notions see \cite{Kirchheim2001, Kruzik2000, Matousek1998}.

Theorem \ref{teo:intro}  can be interpreted in light of Choquet theory.
It is well-known (see, for instance, the lecture notes \cite{Muller1999a}) that Morrey's problem is equivalent to a dual problem, which is the problem of deciding whether the class of homogeneous gradient Young measures and the class of laminates are the same.
Fix a compactly supported  Radon probability measure $\nu$ on $\R^{n\times n}$; for simplicity we assume that $\nu$ has support contained in the interior of the cube $Q\equiv \prod_{i=1}^{n^2} [0,1]\subset \R^{n^2}\cong \R^{n\times n}.$ In order to decide whether $\nu$ is a laminate, we can resort to an important theorem due to \textsc{Pedregral} \cite{Pedregal1993}, which states that $\nu$ is a laminate if and only if Jensen's inequality holds for any rank-one convex integrand $f\colon \R^{n\times n}\to \R$:
$$ f(\overline \nu)\leq \langle \nu , f\rangle , \hs \overline \nu \equiv \int_{\R^{n\times n}} A\textup{ d} \nu(A).$$
Since the class of rank-one convex integrands is rather large, the problem of deciding whether a measure is a laminate is in general very hard. However,
it follows from Choquet theory that one does not need to test $\nu$ against all rank-one convex integrands, it being sufficient to test it in a strictly smaller class:

\begin{teo}\label{teo:choquet}
Let $A_1,\dots, A_{2^{n^2}}$ be the vertices of the cube $Q$. A Radon probability measure $\nu$ supported on the interior of $Q$ is a laminate if and only if
$$g(\overline \nu)\leq \langle \nu, g \rangle $$
for all integrands $g$ which are extreme points of the convex set
$$\left\{f\colon Q\to [0,\infty):  \sum_{i=1}^{2^{n\times n}} f(A_i)= 1, f \textup{ is rank-one convex} \right\}.$$
\end{teo}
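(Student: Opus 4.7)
The plan is to apply Choquet's theorem to $K$, after reducing Pedregal's laminate characterisation to testing Jensen's inequality over $K$. Since Jensen's inequality is invariant under addition of constants and $\supp\nu \cup \{\overline\nu\}$ is compactly contained in $Q$, it suffices to verify it against non-negative rank-one convex integrands on $Q$. For such an integrand $f$, either all $f(A_i)$ vanish---in which case separate convexity forces $f \equiv 0$ on $Q$ and Jensen's inequality is trivial---or $\sum_i f(A_i) > 0$ and the rescaled integrand $f/\sum_i f(A_i)$ lies in $K$. Thus the problem is reduced to proving Jensen's inequality for every $f \in K$.

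The key topological input is that $K$ is compact and convex in the topology of pointwise convergence on $Q$. Convexity is immediate, while for the uniform upper bound one uses the classical fact that a separately convex function on a cube attains its supremum at the vertices: iterating the one-dimensional maximum principle for convex functions on $[0,1]$ in each of the $n^2$ coordinate directions gives $\sup_Q f \leq \max_i f(A_i) \leq 1$ for every $f \in K$. Hence $K \subset [0,1]^Q$, which is compact by Tychonoff, and closedness follows because rank-one convexity, non-negativity and the vertex normalisation all pass through pointwise limits. Exploiting the local Lipschitz property of bounded rank-one convex functions on $\mathrm{int}\, Q$, one may further restrict to pointwise convergence on a countable dense subset of $\mathrm{int}\, Q$ together with the vertices, rendering $K$ a metrisable compact convex subset of a locally convex Hausdorff space.

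Now apply Choquet's theorem: each $f \in K$ is the barycentre of a Radon probability measure $\mu_f$ carried by the extreme points $\mathrm{ext}(K)$, i.e.\ $L(f) = \int_{\mathrm{ext}(K)} L(g)\, d\mu_f(g)$ for every continuous affine $L$ on $K$. The evaluations $L_A \colon f \mapsto f(A)$, for $A \in \mathrm{int}\, Q$, are continuous and affine on $K$; and by dominated convergence (using the uniform bound $f \leq 1$) so is $f \mapsto \langle \nu, f \rangle$. Applying these functionals to the barycentric formula gives
\[ \langle \nu, f \rangle - f(\overline\nu) = \int_{\mathrm{ext}(K)} \bigl( \langle \nu, g \rangle - g(\overline\nu) \bigr)\, d\mu_f(g). \]
If the right-hand integrand is non-negative for every extreme $g$, the left-hand side is non-negative for every $f \in K$; combined with the first paragraph this yields Jensen's inequality for every rank-one convex integrand, and Pedregal's theorem then shows $\nu$ is a laminate. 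The reverse implication is immediate since every extreme $g$ is rank-one convex.

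The principal obstacle I anticipate is the functional-analytic setup: one must realise $K$ as a compact convex metrisable subset of a locally convex Hausdorff space so that Choquet's theorem is applicable, and verify that the target functional $f \mapsto \langle \nu, f \rangle - f(\overline\nu)$ is continuous and affine on $K$. The crucial quantitative ingredient behind both assertions is the vertex bound $\sup_Q f \leq \max_i f(A_i)$ coming from separate convexity, which prevents any boundary blow-up and is precisely what makes the normalisation $\sum_i f(A_i) = 1$ in the definition of $K$ natural.
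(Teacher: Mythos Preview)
Your proposal is correct and follows essentially the same route as the paper: embed the base $K$ in $\mathbb{R}^{Q}$ with the product topology, use the vertex maximum principle for separately convex functions together with Tychonoff to get compactness, obtain metrisability from a countable separating family of evaluations, and then apply Choquet's theorem and Fubini (equivalently, your barycentric identity for the affine functional $f\mapsto \langle\nu,f\rangle-f(\overline\nu)$) to pass Jensen's inequality from $\mathrm{Ext}(K)$ to all of $K$; finally invoke Pedregal. The only point you gloss over that the paper makes explicit is the ``reverse implication'': an extreme $g$ is rank-one convex only on $Q$, so to deduce $g(\overline\nu)\le\langle\nu,g\rangle$ from $\nu$ being a laminate you need a rank-one convex extension of $g$ to $\mathbb{R}^{n\times n}$ agreeing with $g$ on $\mathrm{supp}\,\nu$, which the paper obtains by citing \textsc{\v Sver\'ak}'s extension result.
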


We note that the summation condition is simply a normalization which corresponds to fixing a base of the cone of non-negative rank-one convex integrands on $Q$.

Our interest in extremal integrands was ignited by the work \cite{Astala2015} of 
\textsc{Astala--Iwaniec--Prause--Saksman}, 
where it was shown 
that an integrand known as Burkholder's function is
extreme in the class of  homogeneous, isotropic, rank-one convex
integrands; 
in fact, this integrand is also
the least integrand  in this class, in the sense that no other element of the class is below it at all points.
The relevance of this fact is readily seen: from standard results about quasiconvex envelopes it follows immediately  that the Burkholder function is either quasiconvex everywhere or quasiconvex nowhere.
Burkholder's function was found in the context of martingale theory
by \textsc{Burkholder} \cite{Burkholder1984, Burkholder1988}   and was later generalized to higher dimensions
by \textsc{Iwaniec} \cite{Iwaniec2002}.
This remarkable function is a bridge between Morrey's problem and
important problems in Geometric Function Theory \cite{Astala2009, Iwaniec2001} and
we refer the reader to the very interesting papers \cite{Astala2012,
Astala2015, Iwaniec2002} and the references therein for more details
in this direction.


Finally, we give a brief outline of the paper.
Section \ref{sec:prelims} contains standard definitions, notation and briefly recalls some useful facts for the reader's convenience. Section \ref{sec:hom} comprises results concerning improved homogeneity properties of rank-one convex integrands which vanish on isotropic cones. Section \ref{sec:maintheorem} is devoted to the proof of Theorem \ref{teo:intro}. Finally, Section \ref{sec:choquet} elaborates on the relation between Choquet theory and Morrey's problem and we prove  Theorem \ref{teo:choquet}.

\section{Preliminaries}\label{sec:prelims}

In this section we will gather a few definitions and notation for the
reader's convenience. The material is standard and can be found for
instance in the excellent references \cite{Dacorogna2007} and  \cite{Muller1999a}.

Consider an integrand $E\colon\R^{m\times n}\to \R$. We say that $E$ is \emph{polyconvex} 
if $E(A)$ is a convex function of the minors of $A$,  see \cite{Ball1977}.
If $E$ is locally integrable, we say that $E$ is \emph{quasiconvex}
if there is some bounded open set $\Omega\subset \R^n$ such that, for
all $A\in \R^{m\times n}$ and all $\varphi \in C^{\infty}_0(\Omega,\R^n)$,
$$E(A)\leq \dashint_\Omega E(A+ D \varphi(x)) \d x.$$
This notion was introduced in \cite{Morrey1952} and generalized to
higher-order derivatives by \textsc{Meyers} in \cite{Meyers1965}.
The case of higher derivatives was also addressed in \cite{Ball1981}, where it was shown that quasiconvexity is not implied by the corresponding notion of rank-one convexity if $m>2$ and $n\geq 2$.
The integrand $E$ is \emph{rank-one convex} if, for
all matrices $A, X\in \R^{m\times n}$ such that $X$ has rank one, the function
$$t\mapsto E(A+ t X) $$
is convex. An equivalent definition of rank-one convexity
is that $E$ is rank-one convex if, whenever $(A_i, \lambda_i)_{i=1}^N$ satisfy the $(H_N)$
conditions (c.f. \cite[Def. 5.14]{Dacorogna2007}), we have
$$\sum_{i=1}^NE(\lambda_i A_i)\leq \sum_{i=1}^N \lambda_i E(A_i).$$
By a slight abuse of terminology we will call a collection $(A_i,
\lambda_i)_{i=1}^N$  which satisfies the $(H_N)$ conditions a \emph{prelaminate}.
It is also clear how to adapt the definition of rank-one convexity to the more general situation where $E$ is  defined on an open set $\mc{O}\subset \R^{m\times n}$.
Finally, $E\colon \R^d\to \R$ is \emph{separately convex} if, for all $x\in \R^d$ and all $i=1,\dots, d$, the function $t\mapsto E(x+t e_i)$ is convex; we denote by $e_1,\dots, e_d$  the standard basis of $\R^d$.

We recall that all real-valued rank-one convex integrands are locally Lipschitz continuous, a fact
that we will often use implicitly. Moreover,
if $(m,n)\neq (1,1)$, we have
$$E \textup{ convex }\raisebox{-5pt}{\,$\ou{\Rightarrow}{\not
    \Leftarrow}$\,}
E\textup{ polyconvex }\raisebox{-5pt}{\,$\ou{\Rightarrow}{\not
    \Leftarrow}$\,}   E \textup{ quasiconvex } \Rightarrow E \textup{ rank-one convex }$$
while $E$ quasiconvex $\Leftarrow E$ rank-one convex fails if $n\geq
2, m\geq 3$. The case $n\geq m=2$ is the content of Morrey's problem.

We will only consider square matrices, so $n=m$. 
Besides polyconvexity, the integrands $\det^\pm\colon \R^{n\times n} \to [0,\infty)$ have two important
properties: they are positively $n$-homogeneous (in fact, they are
$n$-homogeneous if $n$ is even) and isotropic. A generic integrand
$E\colon \R^{n\times n}\to \R$ is said to be
\emph{$p$-homogeneous}
for a number $p\geq 1$ if 
$$E(tA)=|t|^p E(A)\hs  \textup{ for all } A\in \R^{n\times n}\textup{
  and all } t\in \R;$$
it is \emph{positively $p$-homogeneous} if the same holds only for $t>0$.
The integrand $E$ is \emph{isotropic} if it is invariant
under the left-- and right--$\textup{SO}(n)$ actions, that is, 
$$E(QAR)= E(A) \hs \textup{for all } Q,R\in \textup{SO}(n).$$
This condition can be understood in a somewhat more concrete way with
the help of singular values. The \emph{singular values} $\widetilde \sigma_1(A)\geq \dots\geq
\widetilde\sigma_n(A)\geq0$ of a matrix $A$ are the eigenvalues of the matrix
$\sqrt{AA^T}$. We shall consider the \emph{signed} singular values
$\sigma_j(A)$, which are defined by
$$
\sigma_j(A)=\widetilde\sigma_j(A) \textup{ for } 1\leq j\leq n-1, \hs
\sigma_n(A)=\textup{sign}(\det A) \widetilde\sigma_n(A).$$
As we shall only deal with the signed singular values, the word signed
will sometimes be omitted.
The importance of singular values is largely due to the
following standard result (c.f. \cite[\S 13]{Dacorogna2007}):
\begin{teo}\label{teo:singval}
  Let $A\in \R^{n\times n}$. There are matrices $Q,R\in
  \textup{SO}(n)$ and real numbers $|\sigma_n|\leq \sigma_{n-1}\leq \dots
  \leq \sigma_1$ such that
$$A=R\Sigma Q, \hs \Sigma=\diag(\sigma_1,\dots,\sigma_n).$$
\end{teo}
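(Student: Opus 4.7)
The plan is to derive the signed SVD by combining the spectral theorem for $A^T A$ with a careful sign adjustment so as to land in $\textup{SO}(n)$ rather than merely $\textup{O}(n)$.

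First, I would diagonalise the symmetric positive semi-definite matrix $A^T A$ via the spectral theorem: pick an orthonormal eigenbasis $u_1,\dots,u_n$ with eigenvalues $\lambda_1\geq \cdots \geq \lambda_n\geq 0$, set $\widetilde\sigma_i\equiv \sqrt{\lambda_i}$, and define $v_i\equiv A u_i/\widetilde\sigma_i$ whenever $\widetilde\sigma_i>0$, extending to an orthonormal basis $v_1,\dots,v_n$ of $\R^n$ in case some singular values vanish. The identity $\langle Au_i, Au_j\rangle = \langle A^T A u_i, u_j\rangle = \lambda_i\delta_{ij}$ shows that $\{v_i\}$ is orthonormal, and collecting these vectors columnwise into matrices $U,V\in \textup{O}(n)$ yields the unsigned SVD $A=V\widetilde\Sigma U^T$ with $\widetilde\Sigma=\diag(\widetilde\sigma_1,\dots,\widetilde\sigma_n)$ and $\widetilde\sigma_1\geq \cdots \geq \widetilde\sigma_n\geq 0$.

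The remaining task is to upgrade $U$ and $V$ to $\textup{SO}(n)$. The key observation is that flipping the sign of both $u_n$ and $v_n$ simultaneously leaves the product $V\widetilde\Sigma U^T$ invariant, since the rank-one summand $\widetilde\sigma_n v_n u_n^T$ picks up two sign flips; so if $\det U=\det V$ this operation restores both determinants to $+1$ and we set $\sigma_n\equiv \widetilde\sigma_n$. If instead $\det U\neq \det V$, flipping only $v_n$ (say) corrects the sign of $V$ but effectively changes the $(n,n)$-entry of $\widetilde\Sigma$ from $\widetilde\sigma_n$ to $-\widetilde\sigma_n$; we then set $\sigma_n\equiv -\widetilde\sigma_n$. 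In either case, setting $R\equiv V$, $Q\equiv U^T$, and $\Sigma\equiv\diag(\sigma_1,\dots,\sigma_n)$ with $\sigma_i\equiv \widetilde\sigma_i$ for $i<n$ gives the desired factorisation with $R,Q\in \textup{SO}(n)$; the ordering $\sigma_1\geq \cdots \geq \sigma_{n-1}\geq |\sigma_n|$ is inherited from $\widetilde\Sigma$ since only the last coordinate is touched, and taking determinants of $A=R\Sigma Q$ yields $\det A = \sigma_1\cdots \sigma_n$, which forces $\sigma_n=\textup{sign}(\det A)\widetilde\sigma_n$ whenever $A$ is invertible (the degenerate case $\widetilde\sigma_n=0$ being vacuous).

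The argument is essentially routine once the spectral theorem is in hand: the only point demanding genuine care is the sign bookkeeping in the final step, which reduces to the algebraic identity $(-v_n)(-u_n)^T=v_n u_n^T$. I would therefore not expect any serious obstacle beyond ensuring that the sign flip is applied to the smallest (in absolute value) singular value, which is precisely what preserves the required ordering.
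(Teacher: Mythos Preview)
Your argument is correct and is the standard derivation of the signed singular value decomposition: diagonalise $A^TA$ by the spectral theorem, build the unsigned SVD, then flip the sign of the last column(s) as needed to land in $\textup{SO}(n)$. The only minor sloppiness is in the phrase ``flipping only $v_n$ (say)'': to be precise, when $\det U\neq \det V$ you flip whichever of $u_n$, $v_n$ belongs to the matrix with determinant $-1$, but this is clearly what you intend.

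As for the comparison: the paper does not actually prove this theorem. It is quoted as a standard result with a reference to \cite[\S 13]{Dacorogna2007}, so there is no in-paper proof to compare against. Your proof is exactly the kind of argument one finds in such a reference.
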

With the help of this theorem we can reinterpret isotropy as follows:
consider the polar decomposition $A=Q\sqrt{AA^T}$ for some $Q\in
\textup{O}(n)$ and consider a diagonalization of the
positive-definite matrix $\sqrt{AA^T}$, so 
$R\sqrt{AA^T}R^{-1}=\textup{diag}(\widetilde \sigma_1,\dots,\widetilde\sigma_n)$
for some  $R\in
\textup{SO}(n)$. Here all the $\widetilde\sigma_j$'s are positive, but by
flipping the sign of the last one if $\det A<0$ we can take $Q\in \textup{SO}(n)$.
Hence, if $E$ is isotropic,
$$E(A)=E(Q\sqrt{A A^T})=E( R\textup{diag}(\sigma_1,\dots,\sigma_n) R^T) =
E(\sigma_1,\dots, \sigma_n).$$
 So isotropic integrands are functions of the signed singular values.

When $n=2$ isotropy is particularly simple to handle, since there is a simple way of understanding the signed singular values of a matrix $A\in \R^{2\times 2}$. For this, we recall the conformal--anticonformal decomposition:  we can write
$$A\equiv \begin{bmatrix}
a & b\\ c & d
\end{bmatrix}
= \frac 1 2 \begin{bmatrix}
a+d &b-c\\
c-b & a+d
\end{bmatrix}
+\frac 1 2
\begin{bmatrix}
a-d &b+c\\
b+c & d-a
\end{bmatrix}
\equiv A^++A^-.
$$
This corresponds to the decomposition
$$\R^{2\times 2} = \R^{2\times 2}_{\textup{conf}}\oplus \R^{2\times
  2}_{\textup{aconf}},$$which is orthogonal with respect to the Euclidean inner
product. Here $\R^{2\times 2}_{\textup{conf}}$ is the space of
\emph{conformal matrices} while $\R^{2\times 2}_{\textup{aconf}}$
corresponds to the
\emph{anticonformal matrices}; these are the matrices
that are scalar multiples of orthogonal matrices and have respectively
positive and negative determinant.
This decomposition is particularly
useful for us because the singular values of $A$ satisfy the 
identities
\begin{equation*}
\begin{split}
\sigma_1(A)&= \frac 1 2 \left(\sqrt{|A|^2+ 2 |\det A|}+\sqrt{|A|^2 - 2 |\det A|}\right)=\frac{1}{\sqrt 2}(|A^+|+|A^-|),\\
\sigma_2(A)&= \frac 1 2 \left(\sqrt{|A|^2+ 2 |\det A|}-\sqrt{|A|^2 - 2 |\det A|}\right)=\frac{1}{\sqrt2} (|A^+|-|A^-|),
\end{split}
\end{equation*}
where $|A |^2=\textup{tr}(A^T A)$ denotes the usual Euclidean norm of
a matrix. Hence, if $n=2$ and $E$ is isotropic, $E(A)=E(|A^+|,|A^-|)$.
In particular, the above formulae yield
$$2 \det A = |A^+|^2-|A^-|^2.$$
The
above decomposition also allows one to identify $\R^{2\times 2}\cong \C^2$
by the linear isomorphism
$$
\begin{bmatrix}
  a & b\\ c&d
\end{bmatrix}
\mapsto \left((a+d) + i(c-b), (a-d)+i (b+c)\right)$$
The advantage of this identification is that we can say that
a integrand $E\colon\C^2\to \R$ is rank-one convex if and only if the function
$$t\mapsto E(z+t \xi, w+t \zeta)$$ is convex for all $(z,w)\in \C^2$
and all $(\xi, \zeta)\in S^1\times S^1$.

The conformal--anticonformal decomposition of $\R^{2\times 2}$ is also related to an important rank-one convex integrand, commonly referred to as \emph{Burkholder's function}. This function can be defined in any real or complex Hilbert space with the norm $\Vert\cdot \Vert$ by
$$B_p(x,y)=((p^*-1) \Vert x \Vert - \Vert y \Vert) ( \Vert x \Vert + \Vert y \Vert)^{p-1},$$
where $p^*-1 = \max(p-1, (p-1)^{-1})$; here and in the rest of the paper, when referring to $B_p$, we implicitly assume that $1<p<\infty$. This remarkable function is zig-zag convex, i.e.
$$t\mapsto B_p(x+t a, y + tb) \textup{ is convex  whenever } \norm{a}\geq \norm{b},$$
see \cite{Burkholder1988}, and it is also $p$-homogeneous. If the Hilbert space where $B_p$ is defined is $\C$, the zig-zag convexity of $B_p$ implies that the Burkholder function $B_p\colon \C\times \C\to \R$ is rank-one convex. Since we are interested in non-negative integrands, we will also deal with the integrand $B_p^+\equiv \max(B_p,0)$, which is also rank-one convex. Moreover, $B_2^+=\det^+$, so $B_p^+$ can be seen as a ``$\det^+$-type integrand'', in the sense that it is rank-one convex, isotropic and vanishes on some cone 
$$\eu{C}_{p^*-1}=\{(z,w)\in \C\times \C: (p^*-1)|z|=|w|\},$$ but it is more general since it can be homogeneous with any degree of homogeneity strictly greater than one. We refer the reader to \cite{Iwaniec2002} for higher dimensional generalizations of $B_p$ and to \cite{Astala2012} for extremality results concerning this integrand.

\section{Homogeneity properties of a class of rank-one convex integrands}
\label{sec:hom}

In this section we discuss homogeneity properties of rank-one convex integrands which vanish on cones, both in two and in higher dimensions. We are interested in the family of isotropic cones of aperture $a\geq 1$,
$$\eu{C}_a\equiv \{(z,w)\in \C^2: a|z|=|w|\},$$
motivated by the fact that the Burkholder function $B_p$ vanishes on $\eu{C}_{p^*-1}$. When $a=1$ we have $\eu{C}_1=\{\det=0\}$, which of course can be defined in any dimension.

\begin{lema}\label{lema:hom}
Let $E\colon \C \times \C\to \R$ be rank-one convex  and assume that, for some $a\geq 1$, $E$ is non-positive on $\eu{C}_a$.
Define
$$h_a(t,k)=\frac 1 t
\frac{(a-k)[t(k-1)(a-1)-(a+1)(k+1)]}{(a+k)[t(k-1)(a+1)-(a-1)(k+1)]}$$  and let $A=(z,w)$ be such that $k\equiv |w|/|z|\leq 1$. Then, for $t\geq 1$,
\begin{align*}
& E(A)\leq h_a(t,k) E(t A).
\end{align*}
\end{lema}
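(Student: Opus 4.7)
The plan is to build a two-step rank-one lamination whose barycenter is $A$ and whose three atoms are two cone points (where $E \le 0$ by hypothesis) together with $tA$, so that rank-one convexity directly yields a bound of the form (weight of $tA$)$\cdot E(tA)$; the content of the lemma is then the identification of this weight with $h_a(t,k)$.

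\emph{Reduction to real atoms.} Writing $z = |z|e^{i\theta_1}$ and $w = |w|e^{i\theta_2}$, I first replace $E$ by $\widetilde{E}(p,q) := E(e^{i\theta_1}p,\, e^{i\theta_2}q)$. Since both the set $S^1\times S^1$ of rank-one directions and the cone $\eu{C}_a$ are invariant under the diagonal $U(1)\times U(1)$ action, $\widetilde{E}$ is still rank-one convex and non-positive on $\eu{C}_a$, and $\widetilde{E}(|z|,|w|) = E(A)$, $\widetilde{E}(t|z|,t|w|) = E(tA)$. This reduces the claim to the real case $A = (x, kx) \in \R^2_{\ge 0}$. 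Inside the real section of $\C^2$, rank-one directions are exactly the four slope $\pm 1$ directions $(\pm 1, \pm 1)$, and $\eu{C}_a \cap \R^2 = \{y = ax\} \cup \{y = -ax\}$.

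\emph{The two-step lamination.} Put $s_1 := \frac{(t-1)(1-k)}{2}$, $B := A + s_1(1,-1)$, and $P_1 := \frac{1+k}{1+a}(1,a)$. Then $P_1 \in \eu{C}_a$ lies on the branch $\{y=ax\}$, the points $P_1, A, B$ are collinear along the slope $-1$ line through $A$, and $A$ is strictly between $P_1$ and $B$. The specific value of $s_1$ is forced by the requirement that the second step be rank-one, because a direct computation shows $tA - B = \frac{(t-1)(1+k)}{2}(1,1)$; hence $B$ and $tA$ lie on a slope $+1$ rank-one segment. Extending this segment past $B$ in the direction opposite to $tA$, one meets the other cone branch $\{y = -ax\}$ at a point $P_2 \in \eu{C}_a$ with $B$ strictly between $P_2$ and $tA$. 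Writing
\[
A = \alpha_1 P_1 + (1-\alpha_1) B, \qquad B = \alpha_2 P_2 + (1-\alpha_2)\, tA,
\]
gives a two-step prelaminate with atoms $(P_1, \alpha_1)$, $(P_2, (1-\alpha_1)\alpha_2)$ and $(tA, (1-\alpha_1)(1-\alpha_2))$. Iterating rank-one convexity and using $E \le 0$ on $\eu{C}_a$ then yields
\[
E(A) \le \alpha_1 E(P_1) + (1-\alpha_1)\alpha_2 E(P_2) + (1-\alpha_1)(1-\alpha_2) E(tA) \le (1-\alpha_1)(1-\alpha_2) E(tA).
\]

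\emph{Identification of the coefficient, and where the difficulty lies.} The weights $\alpha_1, \alpha_2$ are determined by the parameter positions of $P_1, A, B$ and $P_2, B, tA$ on the two rank-one lines, so $(1-\alpha_1)(1-\alpha_2)$ is a routine rational expression in $a,k,t$. After elementary simplification and the sign-matching rewrite $t(k-1)(a\pm 1) - (a\mp 1)(k+1) = -[t(1-k)(a\pm 1) + (a\mp 1)(1+k)]$, this product equals exactly $h_a(t,k)$. The main non-obvious point is the asymmetric geometry: the symmetric two-step choices (both $P_i$ on the same cone branch, or swapping the slopes $-1$ and $+1$ between the two steps) yield only the weaker constant $1/t$, which coincides with $h_a(t,k)$ only along the slice $k = 1$. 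The sharper bound $h_a(t,k)$ comes precisely from $P_1 \in \{y = ax\}$ paired with $P_2 \in \{y = -ax\}$, together with the forced value of $s_1$.
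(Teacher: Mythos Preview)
Your argument is correct and is essentially the same as the paper's proof: your intermediate point $B$ coincides with the paper's $A_1$, and your cone points $P_1,P_2$ are exactly the paper's $B_1,B_2$, so the two-step lamination and the resulting weight $(1-\alpha_1)(1-\alpha_2)=\lambda_1\lambda_2=h_a(t,k)$ match. One small slip: your formulas for $s_1$ and $P_1$ tacitly assume $|z|=x=1$ (otherwise $P_1$ is not on the slope $-1$ line through $A$ and $tA-B$ is not proportional to $(1,1)$); this is harmless since you may normalise by scaling, but you should say so.
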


\begin{proof}
Let us write, for real numbers $x,y \in \R$, $(x,y)\equiv (x z/|z|, y w/|w|)\in \C\times \C$, so $A=(|z|,k|z|)$. We fix $t>1$ since when $t=1$ there is nothing to prove. Let us define the auxiliary points
\begin{gather*}
A_1 = \frac {|z|} {2} \left(1+k+t- kt , 1+k - t + k t\right),\\
B_1=\frac{|z|(1+k)}{a+1}\left(1,a\right),\hs 
B_2 = \frac{t|z|(1-k)}{a+1}\left(1,-a\right),
\end{gather*}
see Figure \ref{fig:hom}. Simple calculations show that
\begin{align*}
&A=\lambda_1 A_1+ (1-\lambda_1) B_1, &\lambda_1 = \frac{2(a -  k)}{(a-1) (k+1) -t(a+1) (k-1) }\\
&A_1=\lambda_2 (tA)+ (1-\lambda_2) B_2, &\lambda_2 = \frac{1+k + t k -t + a (1 + k + t - k t)}{2 (a + k) t}
\end{align*}
and it is easy to verify that $B_1-A_1$ and $B_2-tA$ are rank-one directions. One also needs to verify that $\lambda_1, \lambda_2 \in [0,1]$, which is a lengthy but elementary calculation using the fact that $0\leq k\leq 1\leq a$.
\begin{figure}
  \centering
    \includegraphics[width=0.5\linewidth]{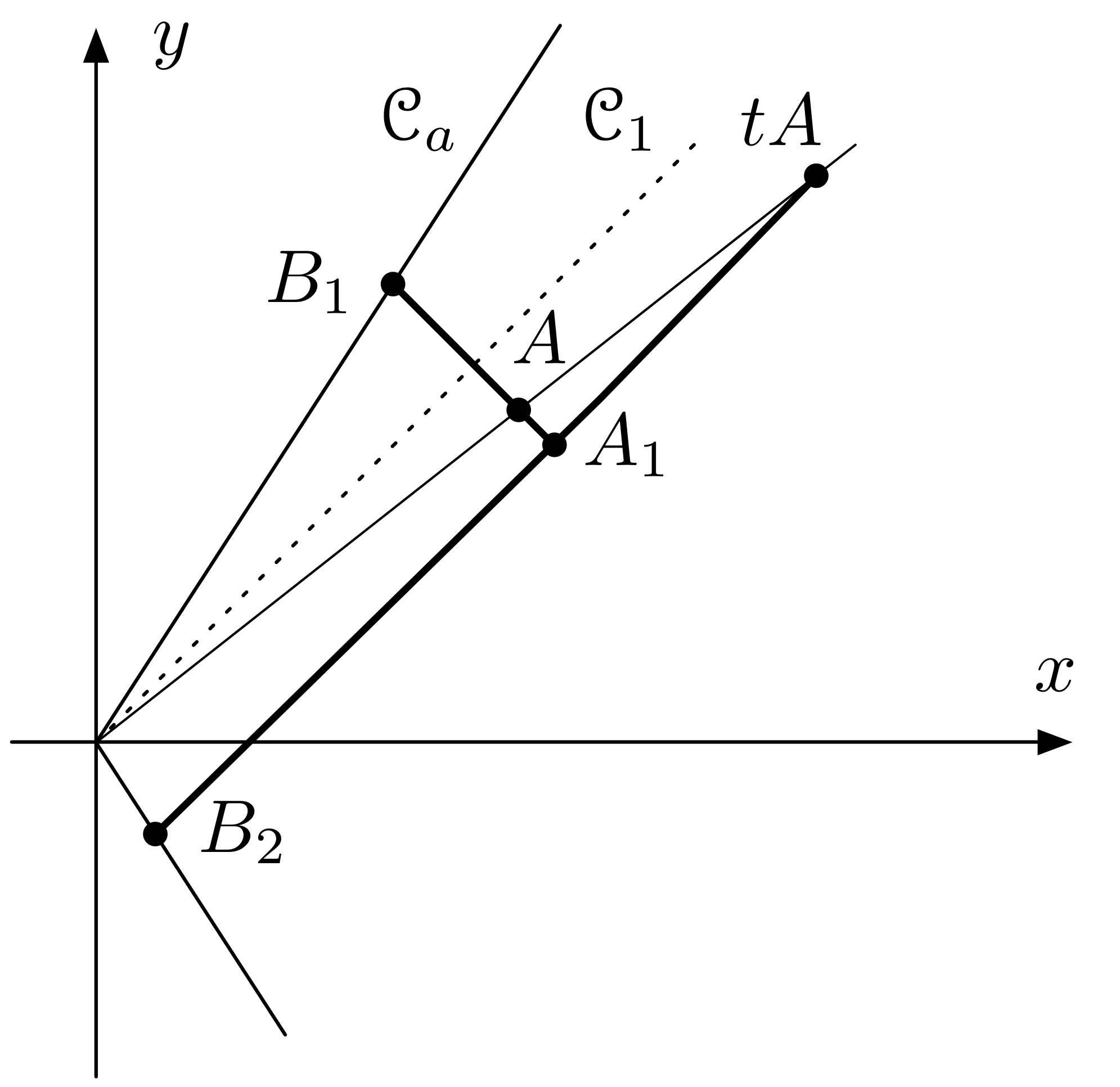}    
\caption{Bold lines are rank-one segments.}
\label{fig:hom}
\end{figure}

Observe that $B_1, B_2\in \eu{C}_a$ and so $E(B_1)=E(B_2)\leq 0$. Therefore, from rank-one convexity, we have
$$E(A) \leq (1-\lambda_1 )E(B_1)+\lambda_1(1-\lambda_2) E(B_2) + \lambda_1 \lambda_2 E(t A)\leq  \lambda_1 \lambda_2 E(tA)$$
and a simple calculation shows that $\lambda_1 \lambda_2 = h_a(t,k)$.
\end{proof}

We now specialise the lemma to two important situations, in which one can say more. Let us first assume that $k=0$.

\begin{prop}\label{prop:hombound}
Let $E\colon \C \times \C \to \R$ be rank-one convex, positively $p$-homogeneous for some $p\geq 1$ and not identically zero. If there is some $a\geq 1$ such that $E=0$ on $\eu{C}_a$ then $p\geq \frac 1 a + 1$.
\end{prop}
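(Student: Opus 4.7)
My plan is to combine Lemma \ref{lema:hom} with the positive $p$-homogeneity of $E$. For any $A = (z, w)$ with $k = |w|/|z| \leq 1$ and $t \geq 1$, the lemma together with $E(tA) = t^p E(A)$ gives
\[E(A) \leq h_a(t, k)\, t^p\, E(A).\]
If one can find $A_0$ with $k_0 = 0$ and $E(A_0) > 0$, dividing through yields $f(t) := h_a(t, 0)\, t^p \geq 1$ for all $t \geq 1$. A direct check shows $h_a(1, k) = 1$ for every $k$, so $f(1) = 1$ is the minimum of $f$ on $[1, \infty)$, forcing $f'(1) \geq 0$. Since $h_a(t, 0) = \frac{t(a-1) + (a+1)}{t[t(a+1) + (a-1)]}$, a logarithmic derivative gives $f'(1) = p - 1 - 1/a$, hence $p \geq 1 + 1/a$.

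The first step in producing $A_0$ is to show $E \geq 0$ on the conformal axis $\{w = 0\}$. I consider $A = (1, 0)$ and the rank-one line $s \mapsto (1 - s, s)$ (the direction $(-1, 1)$ lies in $S^1 \times S^1$). This line meets $\eu{C}_a$ at the two positive parameters $s_1 = a/(a + 1)$ and $s_2 = a/(a - 1)$ (assuming $a > 1$), with $s = 0$ lying strictly to the left of $[s_1, s_2]$. Writing $s_1 = (1 - s_1/s_2)\cdot 0 + (s_1/s_2)\cdot s_2$ and using rank-one convexity,
\[0 = E(s_1) \leq (1 - s_1/s_2)\, E(1, 0) + (s_1/s_2) \cdot 0,\]
so $E(1, 0) \geq 0$; by homogeneity $E \geq 0$ on the whole axis.

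Next I argue by contradiction that $E$ cannot vanish identically on $\{w = 0\}$. If it did, then for $A = (1, k)$ with $0 < k < a$, the rank-one line $(1 + s, k - s)$ hits $\{w = 0\}$ at $s = k$ and $\eu{C}_a$ at $s = (k - a)/(1 + a) < 0$, bracketing $A$ and forcing $E(A) \leq 0$. Combined with the unconditional lower bound $E \geq 0$ (from the previous bracketing, which still produces two positive cone parameters for any $k \in [0, a)$), this gives $E \equiv 0$ on $\{k \leq a\}$. For $B = (1, b)$ with $b > a$, the direction $(-1, 1)$ brackets $B$ between two opposite-sign cone zeros (yielding $E(B) \leq 0$), while $(1, -1)$ places two zeros (on $\eu{C}_a$ and on $\{w = 0\}$) to the right of $B$ (yielding $E(B) \geq 0$); hence $E \equiv 0$ everywhere, contradicting the hypothesis. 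So some $A_0 = (z_0, 0)$ must satisfy $E(A_0) > 0$.

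The hard part will be this non-triviality argument: consistently propagating sign information across the two vanishing sets $\eu{C}_a$ and $\{w = 0\}$ via rank-one bracketings requires careful case-by-case verification that the geometry cooperates in each region $\{k < a\}$ and $\{k > a\}$. The boundary case $a = 1$ needs minor adjustment since generic rank-one lines are tangent to $\eu{C}_1$, but the explicit formula $h_1(t, k) = 1/t^2$ reduces the main inequality to $p \geq 2 = 1 + 1/a$ directly once the positivity step is adapted.
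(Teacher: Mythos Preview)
Your proposal is correct and follows essentially the same strategy as the paper: apply Lemma~\ref{lema:hom} at $k=0$, combine with $p$-homogeneity to get $f(t)=h_a(t,0)\,t^p\geq 1$ with $f(1)=1$, and read off $p\geq 1+1/a$ from $f'(1)\geq 0$; then establish the needed positivity on the conformal axis by a contradiction argument showing that $E\equiv 0$ on $\{w=0\}$ forces $E\equiv 0$ globally. The paper carries out the positivity step slightly differently---it routes through $\eu{C}_1$, using that for $P\in\eu{C}_1$ the ray $t\mapsto tP$ is rank-one so convexity of $t^p E(P)$ forces $E(P)\geq 0$, and then propagates vanishing from $\eu{C}_a^+$ inward via a single rank-one segment with both endpoints in the zero region---whereas you use explicit two-point bracketings with $\eu{C}_a$ (and with $\{w=0\}$) in each region; both implementations work.

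Two small remarks. First, ``by homogeneity $E\geq 0$ on the whole axis'' is not quite the right justification: positive homogeneity scales $|z|$ but does not rotate $z$, so to cover all $(z,0)$ you should say that the same bracketing works with the rank-one direction $(-z/|z|,\zeta)$ for any $\zeta\in S^1$ (and similarly your arguments at $(1,k)$ and $(1,b)$ need the directions aligned with the phases of $z,w$). Second, for $a=1$ your two-cone-intersection trick genuinely degenerates (rank-one lines meet $\eu{C}_1=\{\det=0\}$ only once, since $\det$ is affine along them); the paper handles this case via Lemma~\ref{lema:homogeneity}, which gives $t^2E(A)\leq E(tA)=t^pE(A)$ for \emph{all} $A$ and then one checks that $E\leq 0$ everywhere together with rank-one convexity and $E(0)=0$ forces $E\equiv 0$.
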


We note that this inequality is sharp: indeed, the zero set of the
Burkholder function $B_p$ is $\eu{C}_{p^*-1}$ and so, when $1<p< 2$, we have 
$$a= p^*-1 = \frac{1}{p-1} \Leftrightarrow p = \frac 1 a + 1.$$
Thus we can reinterpret this proposition as saying that, for $1<p< 2$, the Burkholder function has the least possible order of homogeneity of rank-one convex integrands which vanish on $\eu{C}_{p^*-1}$.

\begin{proof}
In this proof we assume that $a>1$, since the case $a=1$ follows from Lemma \ref{lema:homogeneity} below.
We  claim that there is some $z\in \C$ such that $E(z,0)> 0$. Once this is shown, the conclusion follows easily: take $k=0$ in Lemma \ref{lema:hom} 
to find that
$E(A)\leq F_a(t) E(A)$ where
$$F_a(t)=t^{p-1}\frac{a t + a - t + 1}{a t + a + t -1}$$
and $A=(z,0)$.
Since $E(A)>0$, 
we must have $F_a(t)\geq 1$ for all $t\geq 1$. Moreover, $F_a(1)=1$ and an elementary computation reveals that
$$\left.\frac{\d F_a}{\d t}\right|_{t=1}=p-1-\frac 1 a$$
which is non-negative precisely when $p\geq  \frac 1 a + 1$.

To finish the proof it suffices to prove the claim. 
Take an arbitrary $z\in S^1$ and take any rank-one line segment
starting at $(z,0)$ and having the other end-point in $\eu{C}_a$; such a line must intersect $\eu{C}_1$, since $a>1$, say at $P_z$. Note that $E(P_z)\geq 0$, since the function $t\mapsto E(t P_z)= t^p E(P_z)$ is convex. We conclude that $E(z,0)\geq 0$ with equality if and only if $E(P_z)=0$, in which case $E$ is identically zero along the entire rank-one line segment. 

To prove the claim we want to show that if $E(z,0)=0$ for all $z \in \C$ then $E$ is identically zero, so let us make this assumption. Then, from the previous discussion, we see that $E$ is identically zero in the ``outside'' of $\eu{C}_a$, i.e. in 
$$\eu{C}_a^+\equiv \{(z,w)\in \C\times \C: a |z|>|w|\}.$$
Moreover, given any point $P$ in the interior of $\eu{C}_a$, there is a rank-one line segment through $P$ with both endpoints, say $P_1, P_2$, in $\eu{C}_a^+$; this is the case because we assume $a>1$. But $E$ is zero in a neighbourhood of $P_i$ and since it is convex along the rank-one line segment $[P_1,P_2]$  we conclude that it is also zero at $P$.
\end{proof}

We remark that, in one dimension, the only homogeneous extreme convex integrands are linear (c.f. Proposition \ref{prop:extremes})
while, from the results of Section \ref{sec:maintheorem}, for $n>1$ there are extremal rank-one convex integrands in $\R^{n\times n}$ which are positively $k$-homogeneous for any $k\in \{1,\dots, n\}$. It would be interesting to know whether there are extremal homogeneous integrands with other degrees of homogeneity, or whether there is an upper bound for the order of homogeneity of such integrands.

If we set $a=1$ in Lemma \ref{lema:hom}, so $\eu{C}_1=\{\det=0\}$, we see that $h_1(t,k)=t^{-2}$ and we find the estimate $t^2 E(A)\leq E(tA)$ for $t\geq 1$. In fact, this holds in any dimension, and the proof is a simple variant of the proof of Lemma \ref{lema:hom}.

\begin{lema}\label{lema:homogeneity}
Let $E\colon \R^{n\times n} \to \R$ be a rank-one convex integrand which is
non-positive on the cone $\{\det=0\}$. Then for all $A\in \R^{n\times n}$, $E$ satisfies
\begin{align*}
&t^n E(A) \leq E(tA) \hs\textup{ if } 1\leq t.\\
& t^n E(A) \geq E(tA) \hs\textup{ if } 0<t\leq 1.
\end{align*}
\end{lema}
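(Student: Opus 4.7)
The strategy is to use induction on columns: starting from $A$, I would scale its columns one at a time by the factor $t$ via rank-one segments that always meet the singular cone $\{\det = 0\}$. First note that the second inequality (the $0 < t \leq 1$ case) is immediate from the first by replacing $A$ with $tA$ and $t$ with $1/t$, so I focus on $t \geq 1$.

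Denote the columns of $A$ by $a_1, \dots, a_n$ and, for $k = 0, 1, \dots, n$, let $A^{[k]}$ be the matrix with columns $t a_1, \dots, t a_k, a_{k+1}, \dots, a_n$, so that $A^{[0]} = A$ and $A^{[n]} = tA$. The passage from $A^{[k-1]}$ to $A^{[k]}$ is a rank-one perturbation in the direction $R_k = a_k e_k^T$, and by multilinearity of $\det$ in the columns,
$$
\det\bigl(A^{[k-1]} + s R_k\bigr) = (1+s)\det(A^{[k-1]}).
$$
Thus the rank-one line $\{A^{[k-1]} + s R_k : s \in \R\}$ meets $\{\det=0\}$ at $s = -1$, equals $A^{[k-1]}$ at $s = 0$, and equals $A^{[k]}$ at $s = t-1$. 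Writing $0 = \tfrac{t-1}{t}(-1) + \tfrac{1}{t}(t-1)$ and combining rank-one convexity with the hypothesis $E(A^{[k-1]} - R_k) \leq 0$ yields the one-step inequality
$$
E(A^{[k-1]}) \leq \tfrac{t-1}{t} E(A^{[k-1]} - R_k) + \tfrac{1}{t} E(A^{[k]}) \leq \tfrac{1}{t} E(A^{[k]}).
$$
Iterating over $k = 1, \dots, n$ gives $t^n E(A) \leq E(tA)$, which is the required bound.

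I do not foresee a substantial obstacle: the whole content of the proof is the choice of rank-one direction $R_k = a_k e_k^T$, which has the virtue of simultaneously rescaling the $k$-th column of $A^{[k-1]}$ and keeping $\det$ linear in $s$, pinning its zero down at the convenient value $s=-1$. The one edge case is $a_k = 0$, when $R_k$ vanishes and the ``line'' degenerates; but then $A^{[k-1]} = A^{[k]}$ already has a zero column, so $E(A^{[k-1]}) \leq 0$ by hypothesis, and $tE(A^{[k-1]}) \leq E(A^{[k-1]}) = E(A^{[k]})$ holds trivially. This is presumably the ``simple variant'' of Lemma \ref{lema:hom} alluded to by the author: iterating a one-column rank-one step replaces the single planar step used in the $2\times2$ proof.
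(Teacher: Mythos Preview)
Your proof is correct and follows the same core idea as the paper's: write $A$ as the first term of a chain $A=A^{[0]},A^{[1]},\dots,A^{[n]}=tA$ where each step is a rank-one splitting with one endpoint on $\{\det=0\}$, then iterate the inequality $E(A^{[k-1]})\leq \tfrac1t E(A^{[k]})$.

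The difference is in how the chain is built. The paper first treats diagonal $A$, scaling one \emph{diagonal entry} at a time (so the singular matrix $B_j$ has a zero on the diagonal), and then handles general $A$ by conjugating the diagonal prelaminate through the singular value decomposition $A=Q\Sigma R$, checking that rank and determinant are preserved under left/right multiplication by $Q,R$. You instead scale one \emph{column} at a time via $R_k=a_k e_k^T$, which works directly for arbitrary $A$ and makes the SVD detour unnecessary. Your version is therefore a genuine streamlining of the paper's argument: the same inductive mechanism, but with a choice of rank-one direction that removes the need for any preliminary reduction. The paper's route does have one small payoff elsewhere, namely that the diagonal construction transparently stays inside $\R^{n\times n}_{\textup{sym}}$ when $A$ is symmetric (Remark~\ref{rem:symhom}), which is used later; your column-scaling step leaves the symmetric subspace, so for that application one would still want something like the paper's diagonal version.
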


\begin{proof}
Let us begin by observing that the second inequality follows from the first. Indeed, given a matrix $A\in \R^{n\times n}$ and
$0<t<1$, let $B\equiv t A$ and apply the first inequality to $B$ to get
$$E(tA)=E(B)= t^n \frac{1}{t^n} E(B) \leq t^n E\left(\frac 1 t
  B\right) = t^n E(A);$$
this can be done since $1<\frac 1 t $. Hence we shall prove only
the first inequality. 

We begin by proving the statement in the case where $A$ is diagonal,
so there are real numbers $\sigma_j$ such that
$A=\textup{diag}(\sigma_1, \dots, \sigma_n)$. 
  Let $A_0\equiv A$ and define, for $1\leq j \leq n$,
  \begin{align*}
    & A_j = \textup{diag} (t \sigma_1, \dots, t\sigma_j, \sigma_{j+1}, \dots,
      \sigma_n)    & B_j = \textup{diag}(t \sigma_1, \dots, t \sigma_{j-1},0, \sigma_{j+1}, \dots,
      \sigma_n).
    \end{align*}
Hence, for any $1\leq j \leq n$,
\begin{equation}
A_{j-1} = \frac 1 t A_j + \frac{t-1}{t} B_j .\label{eq:splitting}
\end{equation}
This is a splitting of $A_{j-1}$ since we are assuming that $t> 1$ and also
$$A_j-B_j=\textup{diag} (0,\dots, 0, t\sigma_j, 0,
\dots, 0),$$
which is a rank-one matrix.
Iterating (\ref{eq:splitting}) we find
\begin{equation}
A_0 = \frac{1}{t^n} A_n + \sum_{j=1}^n \frac{t-1}{t^j} B_j\label{eq:prelaminate}
\end{equation}
and by construction this is a prelaminate. Thus rank-one convexity of
$E$ yields
\begin{equation}
E(A)=E(A_0)\leq \frac{1}{t^n} E(A_n) + \sum_{j=1}^n \frac{t-1}{t^j}
E(B_j) \leq \frac{1}{t^n} E(tA)\label{eq:7}
\end{equation}
since $\det(B_j)=0$ for all $1\leq j\leq n$, hence $E(B_j)\leq 0$, and
also $A_n= tA$ by definition.

In the case where $A$ is not diagonal, we consider the singular value
decomposition of Theorem \ref{teo:singval}, i.e.
$A=Q\Sigma R$ where $Q,R\in \textup{SO}(n)$ and
$\Sigma=\textup{diag}(\sigma_1, \dots, \sigma_n)$. We see that
(\ref{eq:prelaminate}) can be rewritten as
$$\Sigma  = \frac{1}{t^n} (t\Sigma) + \sum_{j=1}^n \frac{t-1}{t^j} B_j$$
and so, multiplying this by $Q$ and $R$, we get
$$A=Q\Sigma R = \frac{1}{t^n} (t A) + \sum_{j=1}^n \frac{t-1}{t^j}
QB_j R.$$
We still have $\det(Q B_j R)=\det(B_j)=0$ and hence to finish the
proof it suffices to show that this decomposition of $A$ is still a prelaminate.
For this,
we use the following elementary
fact:
\begin{equation*}
\textup{for all } A\in \R^{n\times n} \textup{ and } M\in \textup{GL}(n),  \textup{
   rank}(AM)=\textup{rank}(MA)=\textup{rank}(A). 
\end{equation*}
The splittings used to obtain this prelaminate are rotated versions of
(\ref{eq:splitting}), i.e.
$$QA_{j-1}R = \frac 1 t QA_jR + \frac{t-1}{t} QB_jR$$
and this is still a legitimate splitting since
\begin{equation*}
\textup{rank}(QB_j R-
QA_j R) = \textup{rank}(B_j-A_j)=1.\qedhere
\end{equation*}
\end{proof}

As a simple consequence of the lemma, we  find a rigidity result for decompositions of positively $n$-homogeneous integrands.

\begin{prop}\label{prop:superhom}
Let $E_1, E_2\colon\R^{n\times n}\to \R$ be rank-one convex integrands which are non-positive on $\{\det=0\}$ and assume there is some positively $n$-homogeneous integrand $F$ such that $F=E_1+E_2$. Then each $E_i$ is positively
$n$-homogeneous.
\end{prop}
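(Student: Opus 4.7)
The plan is to apply Lemma \ref{lema:homogeneity} to each $E_i$ individually and then use the hypothesis that their sum is exactly $n$-homogeneous to force the inequalities to be equalities.

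More concretely, fix $A\in \R^{n\times n}$ and $t\geq 1$. Since $E_1$ and $E_2$ are both rank-one convex and non-positive on $\{\det = 0\}$, Lemma \ref{lema:homogeneity} gives
\[
t^n E_1(A) \leq E_1(tA) \hs\text{and}\hs t^n E_2(A) \leq E_2(tA).
\]
Adding these and using that $F=E_1+E_2$ is positively $n$-homogeneous yields
\[
t^n F(A) = t^n E_1(A) + t^n E_2(A) \leq E_1(tA) + E_2(tA) = F(tA) = t^n F(A),
\]
so both inequalities must in fact be equalities, i.e.\ $E_i(tA)=t^n E_i(A)$ for every $t\geq 1$ and $i=1,2$.

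To extend to $0<t\leq 1$, I would simply substitute: given $0<t\leq 1$, set $s=1/t\geq 1$ and $B=tA$; then the already-established identity gives $E_i(A)=E_i(sB)=s^n E_i(B)=t^{-n}E_i(tA)$, yielding $E_i(tA)=t^n E_i(A)$. Together with the trivial case $t=1$, this shows that each $E_i$ is positively $n$-homogeneous.

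There is no significant obstacle: the whole argument is the observation that Lemma \ref{lema:homogeneity} supplies a one-sided bound for each summand, and equality of the sum with the homogeneous target collapses both bounds to equalities. The only point worth noting is that the hypothesis of non-positivity of $E_i$ on $\{\det=0\}$ is exactly what is needed to invoke Lemma \ref{lema:homogeneity} separately for each summand.
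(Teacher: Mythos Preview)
Your proof is correct and rests on the same idea as the paper's: Lemma \ref{lema:homogeneity} gives the one-sided bound $t^n E_i(A)\leq E_i(tA)$ for each summand, and the exact $n$-homogeneity of $F=E_1+E_2$ forces both bounds to be equalities. The paper packages this slightly differently, introducing the ``homogenized'' integrands $E_i^h(A)=|A|^n E_i(A/|A|)$ and comparing $F$ with $E_1^h+E_2^h$ inside and outside the unit ball, but the underlying squeeze is identical; your direct pointwise argument is in fact a bit cleaner.
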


\begin{proof}
Define the ``homogenized'' integrands $E_i^h\colon\R^{n\times n}\to
\R$ by
$$E_i^h(A)\equiv 
\begin{cases}
|A|^n E_i\left(\frac{A}{|A|}
\right) & A\neq 0\\
0 & A=0
\end{cases}
$$
so that the lemma yields
\begin{equation*}
  \begin{split}
   & \textup{if }|A|<1 \textup{ then } E_i(A)\leq E_i^h(A),\\
&\textup{if }|A|>1 \textup{ then } E_i(A)\geq E_i^h(A).
  \end{split}
\end{equation*}
Our claim is that $E_i=E_i^h$.
Since $F= E_1+E_2$ it follows that
$$F\leq E_i^h +  E_2^h \hs\textup{in }
U\equiv\{A\in \R^{n\times n}: |A|\leq 1\}.$$
and we have equality on the sphere $\{A\in \R^{n\times n}: |A|= 1\}$,
where $E_i=E_i^h$. As both sides
of the inequality are positively 
$n$-homogeneous they must be equal in the whole set $U$ and so
$E_i=E_i^h$ in $U$. An identical argument establishes equality in the
complement of $U$.
\end{proof}

\begin{remark}
  The proofs of Lemma \ref{lema:homogeneity} and Proposition \ref{prop:superhom} are fairly robust. In
  particular, a similar statement holds if the integrands $E_i$ are
  defined in $\R^{n\times n}_{\textup{sym}}$ instead of $\R^{n\times
    n}$. Indeed, $\R^{n\times n}_{\textup{sym}}$ is the set of
 (real) matrices that can be diagonalized by rotations. Thus, the prelaminate
 built in the proof of Lemma \ref{lema:homogeneity} has support in
 $\R^{n\times n}_{\textup{sym}}$ if $A$ is symmetric: for the
 nondiagonal case, one can take $R=Q^{-1}$.
\label{rem:symhom}
\end{remark}

Returning to the case $n=2$, it would be pleasant to have a result
analogous to Proposition \ref{prop:superhom} for integrands vanishing
on cones of aperture greater than one. However, this is not possible,
since it would yield the extremality of the Burkholder function in the
class of isotropic rank-one convex integrands. In order to see that
this cannot be the case, we recall that \textsc{\v Sverák}  introduced in \cite{Sverak1990} the rank-one convex function 
$$L\left(z,w\right)\equiv \begin{cases}
|z|^2-|w|^2 & |z|+|w|\leq 1\\
2 |z|-1 & |z|+|w|\geq 1
\end{cases},$$
which is related to the Burkholder function $B_p$, when $1<p<2$, by
$$\int_0^\infty t^{p-1}L\left(\frac z t,\frac w t\right) \d t = \frac{2}{p(2-p)} B_p(z,w);$$
see \cite{Baernstein1997a}. In particular, this shows that one cannot drop the homogeneity assumption from the results of \cite{Astala2012}.

\section{Proof of extremality for truncated minors}
\label{sec:maintheorem}

This section is dedicated to the proof of Theorem \ref{teo:intro}.
Although  truncated minors are not linear along rank-one lines, they
are piecewise linear along such lines. For this reason, it will be
useful to have at our disposal the classification of rank-one affine
integrands, which is due to \textsc{Ball} \cite{Ball1977}
in dimensions three or lower, \textsc{Dacorogna} \cite{Dacorogna2007} in higher
dimensions and also \textsc{Ball-Currie-Olver} \cite{Ball1981} in the
case of higher order quasiconvexity. Given an open set $\mc{O}\subset \R^{n\times n}$ and an integrand $E\colon \mc{O}\to \R$ we say that $E$ is \emph{rank-one affine} if both $E$ and $-E$ are rank-one convex; such integrands are also often called \emph{Null Lagrangians} or \emph{quasiaffine}.

\begin{teo}\label{teo:nulllagrangians}
Let $\mc{O}\subset \R^{n\times n}$ be a connected  open set and consider a rank-one affine integrand $E\colon \mc{O}\to \R$. Then $E(A)$ is an affine combination of the minors of $A$.

More precisely, let $\mathbf{M}
(A)$ be the matrix consisting of the minors of $A$ and let $\tau(n)\equiv (2n)!/(n!)^2$ be its length. There is a constant $c\in \R$ and a vector $v\in \R^{\tau(n)}$ such that
$$E(A)=c+v \cdot \mathbf{M}(A) \hs \textup{ for all } A \in \mc{O}.$$
\end{teo}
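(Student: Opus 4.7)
The plan is to reduce to the classical smooth setting via mollification, obtain a local description on small balls, and then patch using connectedness of $\mc{O}$. Fix $A_0 \in \mc{O}$ and choose an open ball $B$ centred at $A_0$ with $\overline{B}\subset \mc{O}$. For $\varepsilon>0$ smaller than $\textup{dist}(B,\partial \mc{O})$, set $E_\varepsilon := E * \rho_\varepsilon$; this is smooth on $B$, and since convolution with a non-negative kernel preserves convexity of $t\mapsto E(A+tX)$ along rank-one directions $X$ (applied to both $E$ and $-E$, both of which are rank-one convex), the integrand $E_\varepsilon$ is again rank-one affine on $B$.

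For a smooth rank-one affine function, differentiating $t\mapsto E_\varepsilon(A+t\,\xi\otimes \eta)$ twice and equating to zero yields the overdetermined linear system
\[
\sum_{i,j,k,l} \frac{\p^2 E_\varepsilon}{\p A_{ij}\,\p A_{kl}}(A)\,\xi_i\xi_k\eta_j\eta_l = 0 \hs \textup{for all } \xi,\eta\in\R^n,\ A\in B.
\]
The resolution of this system is the content of \textsc{Ball-Currie-Olver} \cite{Ball1981} (see also \cite{Dacorogna2007}), and shows that on a connected open subset of $\R^{n\times n}$ its smooth solutions are precisely the affine combinations of minors. Hence there exist $c_\varepsilon\in\R$ and $v_\varepsilon\in\R^{\tau(n)}$ with $E_\varepsilon=c_\varepsilon+v_\varepsilon\cdot \mathbf{M}$ on $B$.

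As $\varepsilon\to 0$, $E_\varepsilon\to E$ uniformly on compact subsets of $B$. The functions $1,\mathbf{M}_1,\dots,\mathbf{M}_{\tau(n)}$ are linearly independent polynomials, so their restrictions to the non-empty open set $B$ span a closed $(\tau(n)+1)$-dimensional subspace of $C(B)$. Uniform convergence of the $E_\varepsilon$ therefore forces convergence of the coefficient vector $(c_\varepsilon,v_\varepsilon)$ to some $(c_B,v_B)$, yielding $E=c_B+v_B\cdot \mathbf{M}$ on $B$. Uniqueness of this representation makes $(c_B,v_B)$ locally constant in $\mc{O}$, and connectedness of $\mc{O}$ upgrades this to a single globally valid pair $(c,v)$.

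The main obstacle I anticipate is justifying the smooth-case classification on a proper open ball rather than on the full matrix space: the standard references treat the global problem, but the argument essentially establishes that the characterizing PDE above admits a solution space of dimension $\tau(n)+1$ spanned by the minors, which is local in nature; the localization should go through after careful inspection of the argument in \cite{Ball1981}, and once this is in hand the mollification and patching steps are routine.
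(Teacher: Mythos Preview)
Your proposal is correct and follows essentially the same approach as the paper: mollify to a smooth rank-one affine integrand, invoke the (localized) \textsc{Ball--Currie--Olver} classification to obtain a minor expansion, and then pass to the limit/globalize using connectedness. The only notable difference is organizational: the paper isolates an intermediate lemma (the vanishing of $D^kE(A)[v_1\otimes w_1,\dots,v_k\otimes w_k]$ when $w_1,\dots,w_k$ are linearly dependent) which directly forces $E$ to be a polynomial of degree at most $n$ on the whole of $\mc{O}$, thereby bypassing your ball-by-ball patching and making the localization of \cite{Ball1981} explicit rather than left to inspection.
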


This theorem is essentially a particular case of \cite[Theorem 4.1]{Ball1981}, the only difference being that in this paper the authors deal only with integrands defined on the whole space. We briefly sketch how to adapt their proof to our case. The first result needed is the following:

\begin{lema}\label{lema:rank-oneaffine}
Let $\mc{O}\subset \R^{n\times n}$ be open. A smooth integrand $E\colon \mc{O}\to \R$ is  rank-one affine  if and only if, for any $k\geq 2$,
$$D^k E(A)[v_1\otimes w_1, \dots, v_k\otimes w_k] =0$$
for all $A\in \mc{O}$ and all $v_i,w_i \in \R^n$ with $w_1,\dots, w_k$ linearly dependent.

In particular, when $\mc{O}$ is connected, any continuous rank-one affine integrand $E$ is a polynomial of degree at most $n$.
\end{lema}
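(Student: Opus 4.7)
The plan is to handle both directions of the equivalence in the smooth case, and then derive the polynomial statement, extending to merely continuous $E$ by mollification. Sufficiency is immediate: taking $k=2$ and $w_1=w_2=w$ in the hypothesis gives $D^2 E(A)[v\otimes w, v\otimes w]=0$, so $t\mapsto E(A+tv\otimes w)$ has vanishing second derivative, and both $E$ and $-E$ are rank-one convex.

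For necessity, the starting point is that rank-one affinity of $E$ forces all derivatives of $t\mapsto E(A+tv\otimes w)$ of order $\geq 2$ to vanish, i.e.\ $D^k E(A)[v\otimes w,\dots,v\otimes w]=0$ for all $k\geq 2$, $A\in \mc{O}$, and $v,w\in \R^n$. Since this identity in $A$ holds throughout $\mc{O}$, I would differentiate it $k-2$ times in arbitrary matrix directions $X_1,\dots,X_{k-2}\in \R^{n\times n}$ to obtain $D^k E(A)[X_1,\dots,X_{k-2},v\otimes w,v\otimes w]=0$. The crux is then polarization in $v$: as a quadratic polynomial in $v$, the left-hand side has polar form $(v_1,v_2)\mapsto D^k E(A)[X_1,\dots,X_{k-2},v_1\otimes w,v_2\otimes w]$, which is symmetric thanks to the full symmetry of $D^k E$, and hence itself vanishes. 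Expanding each $X_i$ as a sum of rank-one matrices by multilinearity then yields $D^k E(A)[v_1\otimes w_1,\dots,v_k\otimes w_k]=0$ whenever two of the $w_i$ coincide. The general linearly-dependent case follows by a one-line expansion: if $w_k=\sum_{i<k}c_i w_i$, then multilinearity in the last slot produces a sum of terms each having a repeated $w_i$.

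For the polynomial statement, the smooth case is essentially a corollary: any $n+1$ vectors in $\R^n$ are automatically linearly dependent, so the characterization forces $D^{n+1} E(A)[v_1\otimes w_1,\dots,v_{n+1}\otimes w_{n+1}]=0$ unconditionally, and since rank-one matrices span $\R^{n\times n}$, multilinearity extends this to $D^{n+1} E \equiv 0$, so $E$ is polynomial of degree $\leq n$. For continuous $E$, I would mollify: $E_\varepsilon \equiv E * \rho_\varepsilon$ is smooth and still rank-one affine on the relevant subdomain (convolution preserves both rank-one convexity and rank-one concavity), hence polynomial of degree $\leq n$ there. Since the space of such polynomials is finite-dimensional, and therefore closed under locally uniform convergence, the limit $E$ is also polynomial of degree $\leq n$ on every open ball compactly contained in $\mc{O}$; connectedness of $\mc{O}$ then glues these local polynomials into a single global one.

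I expect the main obstacle to be setting up the polarization step carefully and remembering that what it directly delivers is vanishing on tuples with a \emph{repeated} $w_i$, from which the linearly-dependent case must then be recovered via the multilinear expansion trick; without this reduction one is tempted to attempt a much more involved polarization in all $k$ vectors at once. A secondary, minor subtlety is the passage from the smooth polynomial statement to the continuous one, which relies on the standard fact that the finite-dimensional space of polynomials of degree $\leq n$ is closed in the topology of locally uniform convergence.
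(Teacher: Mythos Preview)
Your argument is correct and follows essentially the same route as the paper: polarize the identity $D^2E(A)[v\otimes w,v\otimes w]=0$ in $v$, differentiate in $A$ to reach order $k$, reduce linear dependence to a repeated $w_i$, and then mollify for the continuous case. One small wording slip: the identity you differentiate $k-2$ times must be the second-order one $D^2E(A)[v\otimes w,v\otimes w]=0$, not the $k$-th-order one you wrote just before (otherwise you would land at $D^{2k-2}E$); with that fix your argument matches the paper's, and your treatment of the mollification step via finite-dimensionality of degree-$\le n$ polynomials is in fact cleaner than the paper's.
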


We remark that our proof is very similar to the one in \cite[Theorem 4.1]{Ball1977}.

\begin{proof}
We recall that a  smooth integrand $E$ is rank-one affine if and only if
\begin{equation}
\label{eq:rankoneaffine}
D^2 E(A)[v\otimes w, v \otimes w]=0 \textup{ for all } v, w\in \R^n
\end{equation}
and so clearly one of the directions of the lemma holds. Hence let us assume that $E$ is rank-one affine
and fix some point $A\in \mc{O}$. Define the $2k$-tensor
$T\colon (\R^n)^{2k}\to \R$ by
$$T[v_1,\dots, v_k, w_1, \dots, w_k]\equiv D^kE(A)[v_1\otimes w_1, \dots, v_k\otimes w_k].$$
 Moreover,
 since $E$ is rank-one affine, $T$ is alternating. This follows from the following claim:  if $w_j=w_l$ for some $j\neq l$, then
 $$T[v_1,\dots, v_k, w_1, \dots, w_k]=0.$$ 
 To see why this claim is true, we note that it certainly holds when $k=2$, since (\ref{eq:rankoneaffine}) implies that
\begin{align*}
0&=D^2 E[(v_1+v_2)\otimes w, (v_1+v_2)\otimes w]\\
&=D^2 E(A)[v_1\otimes w, v_2 \otimes w]+
D^2 E(A)[v_2\otimes w, v_1 \otimes w]
\end{align*}
since 
$D^2 E(A)[v_1\otimes w, v_1 \otimes w]=0$ and the same with $v_2$ in the place of $v_1$. 
 For a general $k\geq 2$, we use implicit summation to see that
 \begin{align*}
D^kE(A)[v_1\otimes w_1, \dots, v_k\otimes w_k]&=
\frac{\p^k E(A)}{\p A_{i_1}^{\a_1}\dots   \p A_{i_k}^{\a_k} } v_1^{i_1}\dots v_k^{i_k} w_1^{\a_1} \dots w_k^{\a_k}
\\
&\hspace{-2cm}= 
\frac{\p^{k-2}}{\p A_{i_1}^{\a_1}\dots 
\widehat{\p A_{i_j}^{\a_j}}\dots
\widehat{\p A_{i_l}^{\a_l}}\dots
  \p A_{i_k}^{\a_k} }
\left[
\frac{\p^2 E(A)}{\p A_{i_j}^{\a_j} \p A_{i_l}^{\a_l}}v_j^{i_j} v_l^{i_l}
w_j^{\a_j} w_l^{\a_l}
\right]\times \\
&\hspace{-2cm} \times v_1^{i_1}\dots \widehat{v_j^{i_j}} \dots \widehat{v_l^{i_l}}\dots v_k^{i_k}w_1^{\a_1} \dots\widehat{w_j^{\a_j}}
\dots \widehat{w_l^{\a_l}} \dots w_k^{\a_k}
 \end{align*}
 where
$\widehat{\cdot}$ represents an omitted term. Now we can apply  the $k=2$ case to the term in square brackets to see that
  $$T[v_1, \dots, v_k, w_1, \dots, w_k]=0$$ as wished.

 To prove the lemma under the assumption that $E$ is smooth, let us
take  $w_1, \dots, w_k$  linearly dependent, so we can suppose for simplicity that $w_k=w_1+\dots+ w_{k-1}$. Then
$$T[v_1, \dots, v_k, w_1, \dots, w_{k-1}, w_1+\dots + w_{k-1}]=0$$
since $T$ is linear and alternating. The last statement of the lemma follows by observing that the first part implies that $D^{n+1}E(A)=0$ for all $A\in \mc{O}$.

When $E$ is merely continuous, let $\rho$ be the standard mollifier and let $\rho_\e(A)=\e^{-n^2}\rho(A/\e)$ for $\e>0$. Fix $A\in \mc{O}$ and find an $\e>0$ such that $\textup{dist}(A,\p \mc{O})>\e$. Then $E_\e\equiv \rho_\e \ast E$ is smooth and rank-one affine and hence
$$D^kE_\e(A)[v_1\otimes w_1, \dots, v_k\otimes w_k] =0$$
whenever $w_1, \dots, w_k$ are linearly dependent. Since $D^k E_\e $ converges to $D^k E$ locally uniformly, the conclusion of the lemma follows.
\end{proof}

Using the lemma, we see that in order to prove the theorem it suffices to consider rank-one affine integrands which are homogeneous polynomials, so let us take such an integrand $E$ which is a homogeneous polynomial of some degree $k$. Given any $A\in \mc{O}$, the total derivative $D^k E(A)$ is a symmetric $k$-linear function $D^k E(A)\colon (\R^{n\times n})^k\to \R$; we remark that this operator has as domain the whole matrix space and not just a subset of it. There is an isomorphism between the space of $k$-homogeneous rank-one affine integrands and the space of symmetric  $k$-linear functions $(\R^{n\times n})^k\to \R$ and the proof in \cite{Ball1981} is unchanged in our case.

We recall that a generic symmetric rank-one matrix is of the form $c v\otimes v$ for some $v\in \R^n$ with $|v|=1$ and some $c \in \R$. Hence, we have the following analogue of Lemma \ref{lema:rank-oneaffine}:

\begin{lema}
Let $\mc{O}\subset \R^{n\times n}_\textup{sym}$ be open. A smooth integrand $E\colon \mc{O}\to \R$ is  rank-one affine  if and only if, for any $k\geq 2$,
$$D^k E(A)[v_1\otimes v_1, \dots, v_k\otimes v_k] =0$$
for all $A\in \mc{O}$ and all $v_i,w_i \in \R^n$ with $v_1,\dots, v_k$ linearly dependent.
\end{lema}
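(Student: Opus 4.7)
I will adapt the proof of Lemma \ref{lema:rank-oneaffine} to the symmetric setting. One direction is immediate: taking $k=2$ with $v_2=v_1=v$ gives $D^2 E(A)[v\otimes v,v\otimes v]=0$, which is the defining condition for $E$ to be rank-one affine on $\R^{n\times n}_\textup{sym}$, since every symmetric rank-one matrix is a scalar multiple of some $v\otimes v$. For the converse, assume $E$ is smooth and rank-one affine; we must establish, for every $k\geq 2$ and all linearly dependent $v_1,\dots,v_k\in\R^n$, that $D^k E(A)[v_1\otimes v_1,\dots,v_k\otimes v_k]=0$. The plan rests on two ingredients established first.

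The first ingredient is that, for any fixed $v\in\R^n$, the function $A\mapsto D^2 E(A)[v\otimes v,v\otimes v]$ vanishes identically on $\mc{O}$, hence all of its higher derivatives are zero; by symmetry of mixed partials this yields
\[ D^k E(A)[v\otimes v,\, v\otimes v,\, X_3,\dots,X_k] = 0 \]
for every $v\in\R^n$ and every $X_3,\dots,X_k\in\R^{n\times n}_\textup{sym}$. The second ingredient comes from polarising the quartic identity $v\mapsto D^2 E(A)[v\otimes v,v\otimes v]\equiv 0$. Since the polar form of $v\mapsto v\otimes v$ is $(v,w)\mapsto S(v,w):=\tfrac12(v\otimes w+w\otimes v)$, full polarisation gives, for all $v_1,v_2,v_3,v_4\in\R^n$,
\[ D^2 E(A)[S(v_1,v_2),S(v_3,v_4)] + D^2 E(A)[S(v_1,v_3),S(v_2,v_4)] + D^2 E(A)[S(v_1,v_4),S(v_2,v_3)] = 0. \]
Differentiating this in $A$ in directions $X_3,\dots,X_k$ (using openness of $\mc{O}$ and symmetry of higher derivatives) and then setting $v_3=v_1$, $v_4=v_2$ yields the trading identity
\[ D^k E(A)[v_1\otimes v_1,\,v_2\otimes v_2,\,X_3,\dots,X_k] = -2\, D^k E(A)[S(v_1,v_2),\,S(v_1,v_2),\,X_3,\dots,X_k]. \]

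To conclude, suppose $v_1,\dots,v_k$ are linearly dependent and, after relabelling by symmetry of $D^k E$, write $v_k=\sum_{i<k}c_i v_i$. Expanding $v_k\otimes v_k = \sum_{i<k} c_i^2\, v_i\otimes v_i + 2\sum_{i<j<k} c_i c_j\, S(v_i,v_j)$ and inserting into $D^k E(A)[v_1\otimes v_1,\dots,v_k\otimes v_k]$, the ``diagonal'' contributions proportional to $c_i^2$ give a $D^k E$ with two copies of $v_i\otimes v_i$ and vanish by the first ingredient. For each ``off-diagonal'' term, apply the trading identity on slots $i$ and $j$ to convert $v_i\otimes v_i$ and $v_j\otimes v_j$ into two copies of $S(v_i,v_j)$; combined with the $S(v_i,v_j)$ sitting in slot $k$, this produces a $D^k E$ containing three copies of $S(v_i,v_j)$. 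Writing $S(v_i,v_j)=\tfrac14\bigl((v_i+v_j)\otimes(v_i+v_j)-(v_i-v_j)\otimes(v_i-v_j)\bigr)$ and expanding trilinearly, each resulting summand contains three rank-one symmetric matrices of the form $u\otimes u$ with $u\in\{v_i+v_j,\,v_i-v_j\}$; by the pigeonhole principle at least two of them coincide, and the first ingredient finishes the term. The main obstacle in fleshing this out is the bookkeeping in the polarisation and pigeonhole step; the underlying reductions are linear-algebraic and entirely in the spirit of Lemma \ref{lema:rank-oneaffine}.
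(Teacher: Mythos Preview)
Your proof is correct. The paper itself does not give a separate argument for this lemma; it simply states it as ``the following analogue of Lemma~\ref{lema:rank-oneaffine}'' and moves on. Your write-up supplies the genuine extra work that the symmetric setting requires, and it does so cleanly.

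The point worth highlighting is that the non-symmetric proof in Lemma~\ref{lema:rank-oneaffine} relies on the fact that $D^kE[v_1\otimes w_1,\dots,v_k\otimes w_k]$ is \emph{multilinear} in the $w_i$; once one shows it vanishes whenever two $w_i$ coincide, alternation plus linearity immediately kills the linearly dependent case. In the symmetric situation the expression $D^kE[v_1\otimes v_1,\dots,v_k\otimes v_k]$ is only quadratic in each $v_i$, so the same reduction is unavailable. Your polarisation of the quartic $v\mapsto D^2E(A)[v\otimes v,v\otimes v]$, the resulting trading identity
\[
D^kE[v_i\otimes v_i,\,v_j\otimes v_j,\,X_3,\dots,X_k]=-2\,D^kE[S(v_i,v_j),\,S(v_i,v_j),\,X_3,\dots,X_k],
\]
and the final pigeonhole step via $S(v_i,v_j)=\tfrac14\bigl((v_i{+}v_j)^{\otimes 2}-(v_i{-}v_j)^{\otimes 2}\bigr)$ are exactly what is needed to close this gap. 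All of the identities you use check out (the three-term polarisation identity is the full symmetrisation of the vanishing quartic, using that $D^2E$ already has the dihedral symmetry $(12)$, $(34)$, $(13)(24)$), and the first ingredient legitimately handles every term produced at the end. In short: this is not a different route so much as the honest fleshing-out of what ``analogue'' must mean here.
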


From this we deduce, by the same arguments as in the situation above, the following result:

\begin{teo}\label{teo:symnulllagrangians}
Let $\mc{O}\subset \R^{n\times n}_\textup{sym}$ be a connected  open set and consider a rank-one affine integrand $E\colon \mc{O}\to \R$. Then there is a constant $c\in \R$ and a vector $v\in \R^{\tau(n)}$ such that
$$E(A)=c+v \cdot \mathbf{M}(A) \hs \textup{ for all } A \in \mc{O}.$$
\end{teo}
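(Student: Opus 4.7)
The plan is to mirror the proof of Theorem \ref{teo:nulllagrangians}, adapting each step to the symmetric setting; the main structural change is that we now work with the polarization identity $A = v\otimes v$ for rank-one \emph{symmetric} matrices rather than $v \otimes w$ for general rank-one matrices.

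First I would reduce to the smooth case. Since $\R^{n\times n}_{\textup{sym}}$ is a linear subspace of $\R^{n\times n}$, I can convolve $E$ with a smooth compactly supported mollifier $\rho_\e$ supported in this subspace; the regularization $E_\e \equiv \rho_\e * E$ is then well-defined on $\{A \in \mc{O} : \textup{dist}(A, \partial \mc{O}) > \e\}$ and remains rank-one affine there, since rank-one perturbations of symmetric matrices are themselves symmetric. The preceding lemma (the unnamed analogue of Lemma \ref{lema:rank-oneaffine}) then gives $D^{n+1} E_\e \equiv 0$, so that $E_\e$ is a polynomial of degree at most $n$. Local uniform convergence $E_\e \to E$ passes the polynomial structure to $E$ itself, and connectedness of $\mc{O}$ ensures that $E$ extends uniquely to a polynomial on all of $\R^{n\times n}_{\textup{sym}}$ which is rank-one affine everywhere.

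Next I would decompose $E = \sum_{k=0}^n E_k$ into its $k$-homogeneous parts and argue that each $E_k$ is individually rank-one affine. This follows by expanding, for fixed $A$ and rank-one $v\otimes v$, the second-derivative condition $\frac{d^2}{dt^2} E(sA + t v\otimes v) \equiv 0$ in powers of $s$ and matching coefficients. The cases $k=0$ and $k=1$ yield the constant $c$ and a linear combination of the entries $A_{ij}$, which are the $1\times 1$ minors.

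For $2 \le k \le n$, it suffices to show that each $k$-homogeneous rank-one affine polynomial $E_k$ is a linear combination of $k\times k$ minors. Passing to its polarization $T_k$---the unique symmetric $k$-linear form on $\R^{n\times n}_{\textup{sym}}$ with $E_k(A) = T_k(A,\dots,A)$---the preceding lemma rephrases rank-one affineness as
\[
T_k(v_1\otimes v_1, \dots, v_k\otimes v_k) = 0 \hs \textup{whenever } v_1,\dots,v_k \in \R^n \textup{ are linearly dependent}.
\]
The main obstacle is the algebraic identification of the space of such forms with the span of (symmetrized) $k\times k$ minors. The strategy of \cite[Thm.~4.1]{Ball1981} applies with only formal modifications: the evaluation map $T_k \mapsto [(v_1,\dots,v_k) \mapsto T_k(v_1\otimes v_1,\dots,v_k\otimes v_k)]$ is injective, since any $A \in \R^{n\times n}_{\textup{sym}}$ can be written as a linear combination $\sum_i t_i v_i\otimes v_i$ and $E_k$ is recovered from $T_k$ by full multilinear expansion of $E_k(\sum_i t_i v_i \otimes v_i)$. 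The image consists of multi-symmetric polynomials in $(v_1,\dots,v_k)$ vanishing on linearly dependent tuples; by the standard invariant-theoretic argument (as in \cite{Ball1981}), such polynomials are generated by $k\times k$ Gram-type determinants $\det[v_{i_a}\cdot v_{j_b}]$, which correspond precisely to evaluations of $k\times k$ minors of $\sum_i t_i v_i\otimes v_i$. Summing over $k$ and adding the constant delivers the stated representation $E(A) = c + v\cdot \mathbf{M}(A)$.
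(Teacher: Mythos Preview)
Your proposal is correct and follows essentially the same approach as the paper. The paper's own proof is literally the single sentence ``From this we deduce, by the same arguments as in the situation above, the following result,'' referring back to the sketch after Lemma~\ref{lema:rank-oneaffine} (reduce to smooth, hence polynomial of degree $\le n$, decompose into homogeneous pieces, then invoke \cite{Ball1981} for the algebraic identification with minors); your outline spells out exactly these steps in the symmetric setting, so there is nothing materially different to compare.
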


In order to apply Theorems \ref{teo:nulllagrangians}
and \ref{teo:symnulllagrangians} to the integrands we are interested in, we need to know that its supports are connected (in general, it is clear that any integrand with disconnected support cannot be extremal). Given a minor $M$, let 
$\mc{O}_M\equiv \{M>0\}$. Moreover, each $F_k$ has support in the set
$$\mc{O}_k \equiv \{A\in \R^{n\times n}_\textup{sym}: A \textup{ has index } k \textup{ and is invertible}\}.$$

\begin{lema}\label{lema:connectedcomps}
For any minor $M$ the set $\mc{O}_M$ is connected.
Moreover,
for $k=0,\dots, n$, the sets $\mc{O}_k$ are connected.
\end{lema}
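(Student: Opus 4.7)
The plan is to reduce each statement to the connectedness of a known classical group.

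For the first assertion, I would first observe that the value of a minor $M$ of order $r$ depends only on the $r^2$ entries of the corresponding $r\times r$ submatrix, so the map $\pi\colon \R^{n\times n}\to \R^{r\times r}$ sending $A$ to this submatrix is a surjective linear projection and $\mathcal{O}_M = \pi^{-1}(\mathrm{GL}^+(r))$, where $\mathrm{GL}^+(r)\equiv\{B\in\R^{r\times r}:\det B>0\}$. This identifies $\mathcal{O}_M$ with $\mathrm{GL}^+(r)\times \R^{n^2-r^2}$ up to homeomorphism, so connectedness of $\mathcal{O}_M$ boils down to connectedness of $\mathrm{GL}^+(r)$, which is classical (e.g.\ from the polar decomposition $B=O P$ with $O\in \mathrm{SO}(r)$ and $P$ symmetric positive definite, both of whose factor spaces are path connected).

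For the second assertion, I would set $D_k\equiv \textup{diag}(\underbrace{1,\dots,1}_{n-k},\underbrace{-1,\dots,-1}_{k})$ and invoke Sylvester's law of inertia: two invertible symmetric matrices are congruent (via $P^T(\cdot)P$ for some $P\in \mathrm{GL}(n)$) if and only if they share the same index. This realizes $\mathcal{O}_k$ as the $\mathrm{GL}(n)$-orbit $\{P^T D_k P : P\in \mathrm{GL}(n)\}$. Since $\mathrm{GL}(n)$ has two connected components, the key step is to improve this to the $\mathrm{GL}^+(n)$-orbit, which is connected and continuously mapped into $\R^{n\times n}_{\textup{sym}}$.

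To promote a representative $P$ with $\det P<0$ to one with positive determinant, I would use the diagonal matrix $Q\equiv\textup{diag}(-1,1,\dots,1)$. A direct check shows $Q^T D_k Q = D_k$ and $\det Q=-1$, so $QP\in \mathrm{GL}^+(n)$ with $(QP)^T D_k (QP)=P^T D_k P$. Hence $\mathcal{O}_k$ is the continuous image of the connected space $\mathrm{GL}^+(n)$ under $P\mapsto P^T D_k P$, and therefore connected.

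I do not expect a real obstacle in either step; the only subtlety is keeping the orientation argument clean in the second part, where one must exhibit an orientation-reversing element $Q$ of the stabilizer of $D_k$ so that the two $\mathrm{GL}(n)$-components yield the same orbit.
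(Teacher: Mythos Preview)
Your argument is correct. For $\mathcal{O}_M$ you do exactly what the paper does: project onto the relevant $r\times r$ block and identify $\mathcal{O}_M$ with $\mathrm{GL}^+(r)\times \R^{n^2-r^2}$.

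For $\mathcal{O}_k$ you take a genuinely different route. The paper uses the spectral theorem: given $A\in\mathcal{O}_k$, write $A=Q\Lambda Q^T$ with $Q\in\mathrm{SO}(n)$, first slide $\Lambda$ along diagonal matrices to $\mathrm{diag}(-I_k,I_{n-k})$, then use a path in $\mathrm{SO}(n)$ from $I$ to $Q$ to connect $A$ to $\Lambda$. You instead invoke Sylvester's law of inertia to recognise $\mathcal{O}_k$ as the congruence orbit of $D_k$ under $\mathrm{GL}(n)$, and then shrink this to a $\mathrm{GL}^+(n)$-orbit by exhibiting the orientation-reversing stabiliser element $Q=\mathrm{diag}(-1,1,\dots,1)$ (note $Q^TD_kQ=D_k$ holds for every $k$, since $Q$ is diagonal with $\pm 1$ entries). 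Your version is cleaner in that it reduces everything to the single statement ``continuous image of a connected space'' without writing down explicit paths; the paper's version is slightly more hands-on and stays within $\mathrm{SO}(n)$ rather than the full $\mathrm{GL}^+(n)$, which matches the ambient use of orthogonal diagonalisation in the surrounding sections. Both arguments are equally short and there is no obstacle in either.
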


\begin{proof}
For the first part, let $M$ be an $s\times s$ minor. Let us make the identification
$$\R^{n\times n}\cong \R^{s\times s} \times 
\R^{(n-s)\times (n-s)}$$
so that $M(A)=\det(P_s(A))$, where $P_s$ is the projection of $\R^{n\times n}$ onto $\R^{s\times s}$. Hence we see that, under this identification,
$$\mc{O}_M= \{A\in \R^{s\times s}: \det(A)>0\}\times 
\R^{(n-s)\times (n-s)}.$$
Since both spaces are connected and the product of connected spaces is connected, $\mc{O}_M$ is connected as well.

For the second part, note that the set $\mc{O}_k$ is the set of matrices $A$ for which there is some $Q\in \textup{SO}(n)$ and some
 diagonal matrix $\Lambda=\textup{diag}(a_1, \dots, a_k, b_1, \dots, b_{n-k})$, where $a_i<0$ and $b_j>0$,
 such that
$QAQ^T=\Lambda$. Clearly the set of $\Lambda$'s with this form can be connected to $\textup{diag}(-I_k, I_{n-k})$ by a path in $\mc{O}_k$; here $I_l$ is an $l\times l$ identity matrix. Hence it suffices to prove that there is a continuous path in $\mc{O}_k$ connecting $A=Q\Lambda Q^T$ to $\Lambda$. Such a path is given by $A(t)=Q(t)A Q(t)^T$, where $Q\colon [0,1]\to \textup{SO}(n)$ is a continuous path with $Q(0)=I, Q(1)=Q$.
\end{proof}

We are finally ready to prove the extremality of truncated minors and of \textsc{\v Sverák}'s integrands.

\begin{proof}[Proof of Theorem \ref{teo:intro}]
Let $M$ be a minor and let $E_1, E_2\colon \R^{n\times n}\to [0,\infty)$ be rank-one convex integrands such that $M^+=E_1+E_2$. 
For concreteness, let us say
$$
M\begin{bmatrix}
a_{11}& \dots &a_{1n}\\
\vdots & \ddots & \vdots\\
a_{n1} & \dots & a_{nn}
\end{bmatrix}
= 
\det\begin{bmatrix}
a_{i_1 j_1}& \dots &a_{i_1 j_k}\\
\vdots & \ddots & \vdots\\
a_{i_k j_1} & \dots & a_{i_k j_k}
\end{bmatrix}.
$$
Each $E_i$ is zero outside $\mc{O}_M$ and, in this set, it is rank-one affine, so by Theorem \ref{teo:nulllagrangians} there are constants $c_i$ and $v_i \in \R^{\tau(n)}$ such that
$$E_i(A)=c_i+v_i\cdot \mathbf{M}(A) \hs \textup{for all } A\in \mc{O}_M$$
and in fact, by continuity, this holds in the entire set $\overline{\mc{O}_M}=\{M\geq 0\}$.

Clearly we must have $c_i=0$. Let us write,  for some vectors $v_i^j\in \R^{{{n}\choose{j}}\times{{n}\choose{j}} }$,
$$E_i(A) = \sum_{j=1}^n v_i^j \cdot \mathbf{M}_j(A)\hs \tp{ in } \overline{\mc{O}_M},$$
where $\mathbf{M}_j(A)$ is the matrix of the $j$-th order minors of $A$ (this is denoted by $\tp{adj}_j(A)$ in \cite{Dacorogna2007}). 

We observe that, given some $s$ and some minor $M'$ of order $s$, there is a matrix $A$ such that $M'$ is the only minor of order $s$ that does not vanish at $A$.
Indeed, if
$$M'\begin{bmatrix}
a_{11}& \dots &a_{1n}\\
\vdots & \ddots & \vdots\\
a_{n1} & \dots & a_{nn}
\end{bmatrix}=
\det\begin{bmatrix}
a_{i'_1 j'_1}& \dots &a_{i'_1 j'_{s}}\\
\vdots & \ddots & \vdots\\
a_{i'_{s} j'_1} & \dots & a_{i_{s} j'_{s}}
\end{bmatrix}$$
then we can take a 
matrix $A$ whose only non-zero entries are the entries $a_{i'_\a j'_\a}$ for $\a \in \{1,\dots, s\}$ and set these entries to one, so $M'(A)=1$. Since all other entries of $A$ are zero we see that all other minors of order $s$ vanish at $A$. Note, moreover, that $A$ has rank $s$.

The previous observation, applied with $s=k$ and $M=M'$, shows that for $A\in \overline{\mc{O}_M}$ we have $v_i^k\cdot \mathbf{M}_k(A)= \lambda_i M(A)$, where $\lambda_i \in \R$ is the entry of $v_i^k$ corresponding to $M$. We now prove that all the vectors $v_i^j, j\neq k$, are zero. 

Let $j\leq k$ be the lowest integer for which $v_i^j\neq 0$ and suppose that $j<k$.
Given any minor $M'$ of order $j$, say $M'=e_\a\cdot \mathbf{M}_j$ for some $\a$, there is an $A$ with $\tp{rank}(A)=j$ so that $M'$ is the only minor of order $j$ that does not vanish at $A$. 
Since $A$ has rank $j$ all of its $(j+1)\times (j+1)$ minors vanish and, in particular, $M(A)=0$ and hence $A\in \overline{\mc{O}_M}$. Since $j$ is the lowest integer for which $v_i^j\neq 0$ we have
$$M(A)=0=E_i(A)=v_i^j \cdot \mathbf{M}_j(A)=(v_i^j)_\a M'(A).$$
Since $\a$ was chosen arbitrarily, this is a contradiction and hence we have $j=k$.

Let $j\geq k$ be the highest integer for which $v_i^j\neq 0$ and suppose that $j>k$. Given any minor $M'$ of order $j$, say $M'=e_\a\cdot \mathbf{M}_j$ for some $\a$, there is an $A$ such that $M'$ is the only minor of order $j$ that does not vanish at $A$; moreover, by flipping the sign of the $i_1$-th row of $A$, if need be, we can assume that $A \in \overline{\mc{O}_M}$.  Since $j>k$ is the highest integer for which $v_i^j\neq 0$, by  computing 
$$t^k M(A)= M(tA)=E_1(tA)+E_2(tA)= \sum_{j=1}^n t^j (v_1^j+v_2^j)\cdot \mathbf{M}_j(A)$$
and sending $0<t\nearrow \infty$
we see that we must have $(v_1^j+v_2^j)_\a=0$ and also that the sign of $E_i(t A)$ is, for large $t$, the sign of $(v_i^j)_\a M'(A)$; hence $(v_i^j)_\a=0$ for $i=1,2$. Moreover, since $\a$ was chosen arbitrarily we have $v_i^j=0$ and  we find a contradiction; thus  $j=k$.

The two previous paragraphs show that,
in $\overline{\mc{O}_M}$,
$$E_i(A) = v_i^k \cdot \mathbf{M}_k(A),$$
and we already know that $v_i^k\cdot \mathbf{M}_k(A)= \lambda_i M(A)$, so 
 the proof that $M^+$ is extremal is complete.
The fact that $M^-$ is extremal follows from the identity $M^-=M^+(J\cdot)$, where $J=(j_{\a\beta})$, 
$j_{\a \beta}=\delta_{\a \beta}(1
-2\delta_{\a i_1})
,$ so $J$ changes the sign of the $i_1$-th row.

For the second part of the theorem take some $k\in \{0,\dots, n\}$ and assume that there are rank-one convex integrands $E_1, E_2\colon \R^{n\times n}_\tp{sym}\to [0,\infty)$ such that $F_k=E_1+E_2$. The integrand $F_k$ has support in $\mc{O}_k$, which by Lemma \ref{lema:connectedcomps} is connected, and in this set each $E_i$ is rank-one affine, so Theorem \ref{teo:symnulllagrangians} implies that there are $c_i\in \R, v_i \in \R^{\tau(n)}$ such that
$$E_i(A)=c_i+v_i\cdot \mathbf{M}(A) \hs \textup{for all } A\in \mc{O}_k.$$
By continuity this in fact holds in $\overline {\mc{O}_k}$. From Remark \ref{rem:symhom} we see that each $E_i$ has to be positively $n$-homogeneous and therefore $E_i= \a_i \det$ in $\mc{O}_k$, where $\a_i$ is the last entry of $v_i$.  Since $E_i\geq 0$ we must have, by possibly changing the sign of $\a_i$, $E_i=\a_i |\det|$ in $\mc{O}_k$. Moreover, $E_i=0$ outside $\mc{O}_k$, and so indeed $E_i=\a_i F_k$ as wished.
\end{proof}

We note that, for the second part of the theorem, it is helpful to employ the homogeneity from Remark \ref{rem:symhom}. Indeed, it is easy to see, and it follows in particular from the linear algebraic arguments in the proof of the full space case, that minors of a fixed order are linearly independent as functions on $\R^{n\times n}$. However, this is not the case if instead we think of them as functions defined over $\R^{n\times n}_\tp{sym}$, since there are non-trivial linear relations between minors. For instance, given a $4\times 4$ matrix $A=(a_{ij})$, we have that 
$$-(a_{13}a_{24}-a_{14}a_{23})+(a_{12}a_{34}-a_{14}a_{23})-(a_{12}a_{34}-a_{13}a_{24})=0.$$
This is classical phenomenon and had already been noted, for instance, in \cite[Pages 155-156]{Ball1981}.

\section{Choquet theory and Morrey's problem}
\label{sec:choquet}

In this section we shall see the implications of Choquet theory for Morrey's problem. 
Let us introduce some notation: given a number $d\in \N$, let $Q_d=[0,1]^d$, denote by $A_1, \dots, A_{2^d}$ its vertices and consider the cone
$$\mc{C}_d^{\tp{c}}\equiv \{f\colon Q_d \to [0,\infty): f \textup{ is convex}\}.$$
In a similar fashion we define the cone $\mc{C}_d^{\tp{sc}}$ of non-negative separately convex functions on $Q_d$ and when $d=n\times m$ for some $n,m\in \N$, we have the cones $\mc{C}_d^{\tp{qc}}$ and $\mc{C}_d^{\tp{rc}}$ of non-negative quasiconvex and rank-one convex integrands.
These are  closed convex cones in the locally convex vector space $\R^{Q_d}$ of real-valued functions on $Q_d$; the topology on $\R^{Q_d}$ is the product topology, i.e. the topology of pointwise convergence. In particular, for any $x\in Q_d$ the evaluation functionals $\e_x\colon f\mapsto f(x)$ are continuous. 

We claim that each of the above cones has a compact, convex base:
$$\mc{B}_d^\Box\equiv \mc{C}_d^\Box\cap \Bigg\{f\in \R^{Q_d}:\sum_{i=1}^{2^d} f(A_i)=1\Bigg\}, \hs \Box \in \{\tp{c},\tp{qc},\tp{rc},\tp{sc}\};$$
here we only take $\Box \in \{\tp{qc}, \tp{rc}\}$ if $d=n\times m$. Clearly each $\mc{B}_d^\Box$ is a closed, convex base for $\mc{C}_d^\Box$, so 
it suffices to see that $\mc{B}^{\tp{sc}}_d$ is compact. For this, note that a separately convex function on $Q_d$ attains its maximum at some $A_i$ and, since all functions $f\in\mc{B}^\tp{sc}_d$ are non-negative, we have $f\leq 1$ in $Q_d$. This shows that $\mc{B}_d^\tp{sc}\subset [0,1]^{Q_d}$, which is a compact set by Tychonoff's Theorem, and our claim is proved.

The main tool of this section is the following powerful result:

\begin{teo}[Choquet]
Let $K$ be a  metrizable, compact, convex subset of a locally convex vector space $X$. For each $f\in K$ there is a regular probability measure $\mu$ on $K$ which is supported on the set $\tp{Ext}(K)$ of extreme points of $K$ and which \emph{represents} the point $f$: for all $\varphi \in X^*$,
$$\varphi(f) = \int_{\tp{Ext}(K)} \varphi \d \mu .$$
\end{teo}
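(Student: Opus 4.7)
The plan is to follow the standard Hahn--Banach plus Riesz representation approach to Choquet's theorem. The three key ingredients are the existence of a strictly convex continuous function on $K$, the upper concave envelope operator on $C(K)$, and the characterization of extreme points via the equality set of a strictly convex function and its concave envelope.

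First, I would use metrizability to produce a strictly convex $f_0\in C(K)$. Since $K$ is metrizable compact, $C(K)$ is separable; taking a countable dense sequence $(\varphi_n)$ in its unit ball and setting $f_0\equiv \sum_n 2^{-n}\varphi_n^2$ yields a continuous function which is strictly convex along every segment in $K$, because any two distinct points of $K$ are separated by some $\varphi_n$ and $\varphi_n^2$ is then strictly convex on that segment. For $g\in C(K)$ I would then define the upper concave envelope $\overline g(x)\equiv \inf\{h(x): h\in A(K),\ h\geq g\}$, where $A(K)$ denotes the continuous affine functions on $K$; this envelope is concave and upper semicontinuous and satisfies $\overline{h+g}=h+\overline g$ whenever $h$ is affine.

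Fix $x_0\in K$ (the point denoted $f$ in the statement). The map $p\colon C(K)\to \R$, $p(g)=\overline g(x_0)$, is sublinear, and on the subspace $A(K)+\R f_0$ it agrees with (hence dominates) the linear functional $h+tf_0\mapsto h(x_0)+t\,\overline{f_0}(x_0)$ for $t\geq 0$, while for $t<0$ domination reduces to the trivial inequality $\overline{f_0}(x_0)\geq \underline{f_0}(x_0)$ involving the lower convex envelope. Hahn--Banach extends this linear functional to $L\leq p$ on all of $C(K)$. Since $p(g)\leq \max g$ and $L(1)=1$, the functional $L$ is positive and normalized, so Riesz yields a regular probability measure $\mu$ on $K$ with $L(g)=\int g\d\mu$. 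Because every $\varphi\in X^*$ restricts to an element of $A(K)$, and on $A(K)$ the functional $L$ is just evaluation at $x_0$, we obtain the representing identity $\varphi(x_0)=\int \varphi\d\mu$.

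It remains to show that $\mu$ is supported on $\tp{Ext}(K)$. For this I would use the sandwich
$$\int f_0\d\mu \;=\; L(f_0)\;=\;\overline{f_0}(x_0)\;\geq\;\int \overline{f_0}\d\mu\;\geq\;\int f_0\d\mu,$$
where the first inequality is obtained by writing $\overline{f_0}$ as the pointwise infimum of affine majorants $h\geq f_0$ and using $\int h\d\mu=h(x_0)$. This forces $\mu(\{\overline{f_0}>f_0\})=0$, and the final observation is the elementary fact that $\{\overline{f_0}=f_0\}\subset \tp{Ext}(K)$: if $x=\lambda y+(1-\lambda)z$ with $y\neq z$ and $\lambda\in(0,1)$, then concavity of $\overline{f_0}$ and strict convexity of $f_0$ yield
$$\overline{f_0}(x)\geq \lambda\overline{f_0}(y)+(1-\lambda)\overline{f_0}(z)\geq \lambda f_0(y)+(1-\lambda)f_0(z)>f_0(x),$$
contradicting $\overline{f_0}(x)=f_0(x)$. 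I expect the main subtle point to be the Borel measurability of $\tp{Ext}(K)$, which is what distinguishes the metrizable Choquet theorem from the non-metrizable Choquet--Bishop--de Leeuw version; here it is automatic because $\overline{f_0}$ is upper semicontinuous, making $\{\overline{f_0}=f_0\}$ a $G_\delta$ set, and metrizability lets the support statement be interpreted cleanly.
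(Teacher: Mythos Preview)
The paper does not prove Choquet's theorem; it simply cites Phelps's lecture notes \cite[\S 3]{Phelps2001} and uses the result as a black box. Your outline is essentially the classical proof found there---strictly convex function, upper concave envelope, Hahn--Banach extension dominated by $g\mapsto\overline g(x_0)$, Riesz representation, then the equality-set argument---so in spirit you are reproducing exactly the proof the paper defers to.

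There is one genuine slip, however. In constructing the strictly convex $f_0$ you take $(\varphi_n)$ dense in the unit ball of $C(K)$, but for an arbitrary continuous $\varphi_n$ the square $\varphi_n^2$ need not be convex along segments of $K$, so $f_0=\sum 2^{-n}\varphi_n^2$ need not be convex at all, let alone strictly convex. The standard remedy is to take $(\varphi_n)$ dense in the unit ball of $A(K)$ (equivalently, of $X^*|_K$); then each $\varphi_n^2$ is convex as the composition of an affine map with $t\mapsto t^2$, Hahn--Banach ensures that $A(K)$ separates points of $K$, and your strict-convexity argument goes through verbatim. With this correction the rest of your proof is correct as written.
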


For a proof see, for instance, \cite[\S 3]{Phelps2001}.
We note that in general---and this is also the case in our situation---the representing measure is not unique.
In order to apply this theorem to $\mc{B}_d^\Box$, we need to show that this set is metrizable and this can be done by using a simple result from point-set topology; for a proof see, for instance, \cite[Lemma 10.45]{Lukes2010}.

\begin{lema}
Let $K$ be a compact Hausdorff space. Then $K$ is metrizable if and only if there is a countable family of continuous functions on $K$ which separates points.
\end{lema}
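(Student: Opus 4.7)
The plan is to prove both directions separately; the content is classical, and the main idea in each direction is an embedding construction.

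For the forward direction, assume $K$ is compact and metrizable with metric $d$. Since every compact metric space is separable, pick a countable dense subset $\{x_n\}_{n \in \N} \subset K$ and define $f_n \colon K \to \R$ by $f_n(x) = d(x,x_n)$. Each $f_n$ is continuous (in fact $1$-Lipschitz), and the family separates points: if $x \neq y$, choose $x_n$ with $d(x,x_n) < d(x,y)/2$, so that by the triangle inequality $f_n(y) \geq d(x,y) - d(x,x_n) > d(x,x_n) = f_n(x)$.

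For the harder direction, suppose $\{f_n\}_{n \in \N}$ is a countable family of continuous real-valued functions on $K$ separating points. After replacing each $f_n$ by $\arctan(f_n)/(2\Vert \arctan(f_n)\Vert_\infty + 1)$, or any similar normalization, we may assume $f_n(K) \subset [0,1]$ for every $n$. Consider the map
$$\Phi \colon K \to [0,1]^\N, \qquad \Phi(x) = (f_n(x))_{n \in \N}.$$
Then $\Phi$ is continuous (each coordinate is), and injective because the $f_n$ separate points. The Hilbert cube $[0,1]^\N$ is metrizable by the explicit metric $\rho((a_n),(b_n)) = \sum_{n=1}^\infty 2^{-n}|a_n - b_n|$, which induces the product topology. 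Since $K$ is compact and $[0,1]^\N$ is Hausdorff, $\Phi$ is a closed map, hence a homeomorphism onto its image $\Phi(K) \subset [0,1]^\N$. Pulling back $\rho$ along $\Phi$ gives a metric on $K$ compatible with its topology, so $K$ is metrizable.

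The only non-trivial step is the closed-map argument in the second paragraph, and this is entirely standard: continuous maps from compact spaces to Hausdorff spaces send closed sets to compact (hence closed) sets, so continuous injections are automatically topological embeddings. I do not anticipate a genuine obstacle; the lemma is essentially the statement that a compact Hausdorff space is metrizable precisely when it embeds into the Hilbert cube.
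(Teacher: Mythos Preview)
Your proof is correct. The paper does not actually prove this lemma; it simply cites \cite[Lemma 10.45]{Lukes2010} for a proof, so there is no argument to compare against beyond noting that your Hilbert-cube embedding is the standard route and matches what one finds in such references.
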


In our situation, it is easy to see that such a family exists: indeed, let $x_n\in Q_d$ be a countable set of points which is dense in $Q_d$ and consider the evaluation functionals $\e_{x_n}\colon f\mapsto f(x_n)$ on $\mc{B}_d^\Box$. These functionals are continuous on $\mc{B}_d^\Box$ and 
separate points, since all elements of $\mc{B}_d^\Box$  are continuous real-valued functions on $Q_d$. Therefore the lemma implies that $\mc{B}_d^\Box$ is metrizable, and hence Choquet's theorem yields:

\begin{prop}\label{prop:extremes}
Fix $\Box \in \{\tp{c},\tp{qc},\tp{rc},\tp{sc}\}$ and let $\nu$ be a regular probability measure in $Q$.	The measure $\nu$ satisfies 
$f(\overline \nu )\leq \langle  \nu,f \rangle$ for all $f\in \mc{C}_d^\Box$ if and only if 
$g(\overline \nu )\leq \langle  \nu,g \rangle$ for all $g\in \tp{Ext}(\mc{B}_d^\Box)$.
\end{prop}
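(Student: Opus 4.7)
The plan is to apply Choquet's theorem to the metrizable compact convex base $K=\mc{B}_d^\Box$ and transfer Jensen's inequality from extreme points to arbitrary elements via the integral representation.

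The ``only if'' direction is immediate since $\tp{Ext}(\mc{B}_d^\Box)\subset \mc{B}_d^\Box\subset \mc{C}_d^\Box$. For the converse, first reduce to $f\in\mc{B}_d^\Box$: either $f(A_i)=0$ for every vertex, in which case the non-negativity and separate convexity of $f$ force $f\equiv 0$ and Jensen holds trivially, or we replace $f$ by $f/\sum_i f(A_i)$, which lies in $\mc{B}_d^\Box$ and for which Jensen is equivalent to the same inequality for the original $f$. Given $f\in \mc{B}_d^\Box$, invoke Choquet's theorem (valid because $\mc{B}_d^\Box$ was shown to be metrizable, compact and convex) to obtain a regular probability measure $\mu_f$ on $\mc{B}_d^\Box$ concentrated on $\tp{Ext}(\mc{B}_d^\Box)$ and representing $f$.

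Now consider the two functionals $\Lambda_1(g)\equiv g(\overline\nu)$ and $\Lambda_2(g)\equiv \langle \nu,g\rangle$, both defined on $\mc{B}_d^\Box$. They are clearly affine. The first one is an evaluation functional, hence continuous on $\R^{Q_d}$ in the product topology and a fortiori on $\mc{B}_d^\Box$. The second one is the key step: integration against $\nu$ is not continuous on all of $\R^{Q_d}$, but on $\mc{B}_d^\Box$ every function satisfies $0\leq g\leq 1$ pointwise (because a non-negative separately convex function on $Q_d$ attains its maximum at some vertex and $\sum_i g(A_i)=1$). By the bounded convergence theorem this uniform bound makes $\Lambda_2$ sequentially continuous on $\mc{B}_d^\Box$, and since $\mc{B}_d^\Box$ is metrizable, sequential continuity gives continuity. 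The barycentric formulation of Choquet's theorem then applies to both $\Lambda_1$ and $\Lambda_2$, yielding
\begin{equation*}
\Lambda_1(f)=\int_{\tp{Ext}(\mc{B}_d^\Box)}\Lambda_1(g)\d\mu_f(g),\qquad
\Lambda_2(f)=\int_{\tp{Ext}(\mc{B}_d^\Box)}\Lambda_2(g)\d\mu_f(g).
\end{equation*}
Assuming Jensen's inequality $\Lambda_1(g)\leq \Lambda_2(g)$ holds for every $g\in \tp{Ext}(\mc{B}_d^\Box)$, integrating against $\mu_f$ gives $\Lambda_1(f)\leq \Lambda_2(f)$ as required.

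The only genuine subtlety, and the main obstacle to a naive application of Choquet's theorem, is the continuity of $\Lambda_2$: pointwise convergence on $Q_d$ does not in general imply convergence of integrals against $\nu$. This is overcome precisely by the uniform bound $g\leq 1$ available on $\mc{B}_d^\Box$, which is the same ingredient that was used earlier in the section to establish compactness of $\mc{B}_d^{\tp{sc}}$ via Tychonoff's theorem.
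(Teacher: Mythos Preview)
Your argument is correct and follows essentially the same route as the paper's: reduce to $f\in\mc{B}_d^\Box$, invoke Choquet's theorem, and transfer Jensen's inequality via the representing measure. The only cosmetic difference is in the key step: the paper writes $\langle\nu,f\rangle=\int_Q\!\int g(x)\,\d\mu(g)\,\d\nu(x)$ by applying the evaluation functionals $\e_x$ pointwise and then swaps the order of integration via Fubini, whereas you package the same computation as continuity of the affine functional $\Lambda_2$ on the compact base (using the uniform bound $0\le g\le 1$ and dominated convergence) together with the barycentric formula for continuous affine functions. Both devices rely on the same ingredient---the uniform bound on $\mc{B}_d^\Box$---and yield the same conclusion.
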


\begin{proof}
Assume that we have Jensen's inequality for all $g \in\tp{Ext}(\mc{B}_d^\Box)$, take any $f\in \mc{B}_d^\Box$ and let $\mu$ be the measure given by Choquet's theorem. If we take $\varphi=\e_{\overline \nu}$ in the theorem, we can apply Fubini's theorem to see that
\begin{align*}
f(\overline \nu) = \e_{\overline{\nu}}(f) &= \int_{\tp{Ext}(\mc{B}_d^\Box)}\e_{\overline \nu}(g) \d\mu(g)\\
&\leq
\int_{\tp{Ext}(\mc{B}_d^\Box)}\int_Q g \d \nu \d\mu(g) \\
&= \int_Q \int_{\tp{Ext}(\mc{B}_d^\Box)} g d\mu(g) d\nu = 
\int_Q f d\nu.
\end{align*}
Since any $h\in \mc{C}_d^\Box$ can be written uniquely as $h=\lambda f$ for some $\lambda>0, f \in \mc{B}_d^\Box$, the conclusion follows.
\end{proof}

Theorem \ref{teo:choquet} follows easily from Proposition \ref{prop:extremes}. 
For the reader's convenience, we restate the theorem here:

\begin{teo}
Let $d=n^2$ and take a Radon probability measure $\nu$ supported in the interior of $Q_d$. Then $\nu$ is a laminate if and only if
$$g(\overline \nu)\leq \langle  \nu,g \rangle $$
for all integrands $g\in \tp{Ext}(\mc{B}_d^\tp{rc})$.
\end{teo}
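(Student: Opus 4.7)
The plan is to combine Pedregal's theorem with Proposition~\ref{prop:extremes} applied to $\Box = \tp{rc}$ and $d = n^2$. Proposition~\ref{prop:extremes} says that Jensen's inequality for $\nu$ against every extreme point $g \in \tp{Ext}(\mc{B}_d^{\tp{rc}})$ is equivalent to Jensen's inequality for $\nu$ against every $f \in \mc{C}_d^{\tp{rc}}$, while Pedregal's theorem says $\nu$ is a laminate iff Jensen's inequality holds against every rank-one convex integrand $F\colon \R^{n\times n}\to \R$. Thus the theorem reduces to proving the equivalence
\begin{equation*}
\textup{Jensen for all rank-one convex } F\colon \R^{n\times n}\to\R \iff \textup{Jensen for all } f \in \mc{C}_d^{\tp{rc}}.
\end{equation*}

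I would first handle the easier direction $(\Leftarrow)$: given a rank-one convex $F\colon \R^{n\times n}\to\R$, its restriction to $Q_d$ is continuous, so $F|_{Q_d} + c \in \mc{C}_d^{\tp{rc}}$ where $c\equiv -\min_{Q_d} F$. The assumed Jensen inequality for $F|_{Q_d} + c$ yields Jensen for $F$ after canceling the constant, which is legitimate since $\tp{supp}(\nu)\cup\{\overline \nu\}\subset Q_d$.

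For the harder direction $(\Rightarrow)$, given $f \in \mc{C}_d^{\tp{rc}}$, I would exploit the approximation of laminates by prelaminates. Since $\nu$ is a laminate, there is a weak-$\ast$ approximating sequence of prelaminates $\nu_k=\sum_{i=1}^{N_k}\lambda_i^k \delta_{A_i^k}$ with $\overline{\nu_k}\to \overline\nu$; because $\tp{supp}(\nu)$ lies in the interior of $Q_d$, such a sequence can be taken with $\tp{supp}(\nu_k)\subset Q_d$ for all large $k$. The $(H_{N_k})$ structure of $\nu_k$ combined with rank-one convexity of $f$ on $Q_d$ gives the prelaminate Jensen inequality
\begin{equation*}
f(\overline{\nu_k}) \leq \langle \nu_k, f\rangle
\end{equation*}
by induction on the number of rank-one splittings, and passing to the limit using continuity of $f$ on $Q_d$ together with $\nu_k\wstar \nu$ yields the desired inequality for $\nu$.

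The main obstacle I anticipate is the claim in $(\Rightarrow)$ that $\nu$ admits an approximating sequence of prelaminates whose supports are contained in $Q_d$; this is a technical localization argument that relies essentially on the hypothesis $\tp{supp}(\nu)\subset \tp{int}(Q_d)$. A robust alternative is to extend $f$ directly to a rank-one convex integrand on $\R^{n\times n}$---for instance by first mollifying $f$ to a smooth rank-one convex function on a slightly smaller cube containing $\tp{supp}(\nu)\cup\{\overline \nu\}$ and then extending the smooth function to all of $\R^{n\times n}$ in a way that preserves the rank-one Hessian condition---and then invoking Pedregal's theorem in its original global form. Either route completes the proof.
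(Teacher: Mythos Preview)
Your proposal is correct and, via your alternative (b), lands on exactly the argument the paper gives. The paper reduces to Proposition~\ref{prop:extremes} and Pedregal's theorem, and then closes the gap between ``Jensen for all $f\in\mc{C}_d^{\tp{rc}}$'' and ``Jensen for all rank-one convex $F\colon\R^{n\times n}\to\R$'' precisely by extending a given $g\colon Q_d\to[0,\infty)$ to a rank-one convex integrand on $\R^{n\times n}$ that agrees with $g$ on $\tp{supp}(\nu)$; it cites \textsc{\v Sver\'ak} (1990) for this extension rather than sketching a mollify-and-extend construction, but the role of the step is the same as in your route (b).

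Two small differences worth noting. First, for the direction you label $(\Leftarrow)$ the paper uses the truncations $f_k\equiv k+\max(f,-k)$ and lets $k\to\infty$, whereas you simply add the constant $-\min_{Q_d}F$; on the compact cube these are equivalent, and your version is cleaner. Second, your route (a) through prelaminate approximation is not the paper's argument, and you are right to flag the localization of the approximating prelaminates to $Q_d$ as the genuine technical point: this does not come for free from the definition of a laminate as a weak-$\ast$ limit of prelaminates, and making it precise essentially amounts to redoing Pedregal's construction in a localized form. The extension approach (b) sidesteps this entirely, which is why the paper takes it.
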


\begin{proof}
From Pedregal's Theorem,
the measure $\nu$ is a laminate if and only if Jensen's inequality holds for any rank-one convex integrand $f\colon \R^{n\times n}\to \R$:
$$f(\overline \nu)\leq \langle \nu , f\rangle , \hs \overline \nu \equiv \int_{\R^{n\times n}} A\textup{ d} \nu(A).$$
Note that if this inequality holds for all \emph{non-negative} rank-one convex integrands then it holds for any rank-one convex integrand: given any such $f$, one can consider the new integrand
$$f_k\equiv k+\max(f,-k)$$
which is non-negative and rank-one convex, hence by hypothesis
$f_k(\overline \nu)\leq \langle \nu, f_k \rangle $. This in turn is equivalent to
$$  \max(f(\overline \nu),-k) \leq \langle \nu, \max(f,-k)\rangle.$$
Sending $k\to \infty$ we find that
$ f(\overline \nu) \leq \langle \nu, f\rangle $,
as we wished; note that $f$, being continuous, is bounded by below on $Q_d$.

Therefore, from Proposition \ref{prop:extremes}, the theorem follows once we show that any rank-one convex integrand $g\colon Q_d\to [0,\infty)$ can be extended to a rank-one convex integrand $f\colon \R^{n\times n}\to \R$ with $g=f$ in the support of $\nu$. 
This is a standard result, see \cite{Sverak1990}.
\end{proof}

We end this section with some cautionary comments concerning the previous results.
In the one dimensional case, where all the above cones coincide, the extreme points are quite easy to identify; the oldest reference we found where this problem is discussed is \cite{Blaschke1916}, but see also \cite[\S 14.1]{Lukes2010}. 
 
\begin{prop}
The set of extreme points of $\mc{B}_1$ is the set $\{\varphi_y, \psi_y: y\in [0,1]\}$, where the functions $\varphi_y, \psi_y$ are defined by
\begin{align*}
&\varphi_y \colon x \mapsto \frac{(x-y)^+}{1-y} \textup{ for } x \in [0,1], y \in [0,1), \hs \varphi_1 = 1_{\{1\}},\\
&\psi_y \colon x \mapsto \frac{(y-x)^+}{y} \textup{ for } x \in [0,1], y \in (0,1], \hs \psi_1 = 1_{\{0\}}.
\end{align*}
\end{prop}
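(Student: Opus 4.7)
My plan has two stages: first, verify directly that each $\varphi_y$ and $\psi_y$ is extreme; second, show that any other $f\in\mc{B}_1$ admits a non-trivial convex decomposition, and hence is not extreme.

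For the first stage, I would consider a hypothetical splitting $\varphi_y=\lambda g+(1-\lambda)h$ with $g,h\in\mc{B}_1$ and $\lambda\in(0,1)$. Non-negativity forces $g=h=0$ on the zero set $[0,y]$ of $\varphi_y$, so in particular $g(0)=h(0)=0$ and the normalization yields $g(1)=h(1)=1$. On $[y,1]$, the convexity bound $g(x)\leq\tfrac{x-y}{1-y}g(1)=\varphi_y(x)$ (and similarly for $h$) combined with the identity $\lambda g+(1-\lambda)h=\varphi_y$ forces $g=h=\varphi_y$ throughout. The degenerate case $\varphi_1=1_{\{1\}}$ is handled identically, with $g=h=0$ on $[0,1)$, and the $\psi_y$ family follows by applying the reflection $x\mapsto 1-x$.

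For the converse, given $f\in\mc{B}_1$ not of the form $\varphi_y$ or $\psi_y$, I would distinguish cases based on the zero set $Z=\{f=0\}$, which is a subinterval by convexity. When $Z$ contains a point in $(0,1)$, say $Z=[a,b]$ with $0<a\leq b<1$, the natural split $f=f\cdot 1_{[0,a]}+f\cdot 1_{[b,1]}$ yields two non-negative convex summands with strictly positive values at $0$ and $1$ respectively, producing a non-trivial convex combination in $\mc{B}_1$ after normalization. When $Z=[0,b]$ or $Z=[a,1]$ touches exactly one endpoint, I would pass to the integral representation $f(x)=\int(x-y)^+\,\d\mu(y)$ via the distributional second derivative $\mu=f''$: if $\mu$ is a single Dirac mass then $f$ coincides with some $\varphi_y$ or $\psi_y$, contradicting our hypothesis, while otherwise one can split $\mu=\mu_1+\mu_2$ into two non-zero, non-proportional measures, yielding $f=\alpha_1\tilde f_1+\alpha_2\tilde f_2$ for suitable distinct $\tilde f_i\in\mc{B}_1$. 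When $Z=\emptyset$ the bound $\min f>0$ allows the simple decomposition $f=\lambda\psi_1+(1-\lambda)h$ for any $\lambda\in(0,\min f)$, and the edge cases $Z=\{0\}$ or $\{1\}$ reduce to the integral representation argument again. Finally, endpoint discontinuities, which yield the degenerate limits $\varphi_1$ and $\psi_0$, are treated by peeling off appropriate multiples of $1_{\{1\}}$ or $1_{\{0\}}$.

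The main obstacle I anticipate is verifying the injectivity of the map $\mu\mapsto \int(x-y)^+\,\d\mu$ on finite non-negative Radon measures on $[0,1]$, which is needed to guarantee that splitting $\mu$ into non-proportional pieces yields two \emph{genuinely distinct} elements of $\mc{B}_1$ rather than scalar multiples of the same function; this should follow from recovering $\mu$ as the distributional second derivative of $f_\mu$. Apart from this point and the careful bookkeeping of the several boundary cases (degenerate endpoints and endpoint discontinuities), the argument is largely routine.
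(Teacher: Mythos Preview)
The paper does not actually supply a proof of this proposition; it merely states the result and refers to \textsc{Blaschke} and to \cite[\S 14.1]{Lukes2010}. So there is nothing in the paper to compare your argument against directly.

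Your plan is essentially correct and follows the classical route. Stage~1 is clean: vanishing of $g,h$ on $[0,y]$ together with the normalisation pins down the endpoint values, and the chord inequality $g(x)\le\varphi_y(x)$ on $[y,1]$ then forces equality. For Stage~2 the key device---representing a convex $f$ with $f(0)=0$ and $f'_+(0)=0$ as $f(x)=\int (x-y)^+\,\d\mu(y)$ with $\mu=f''$ a non-negative Radon measure on $(0,1)$---is exactly the standard one, and the injectivity you worry about is immediate since $\mu$ is recovered from $f$ as its distributional second derivative. Splitting $\mu$ by restriction to disjoint Borel sets of positive mass then gives non-proportional pieces, hence genuinely distinct normalised summands in $\mc{B}_1$.

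Two small points of care. First, there is a typo in the statement: the degenerate function $1_{\{0\}}$ should be labelled $\psi_0$, not $\psi_1$ (the formula already gives $\psi_1(x)=1-x$). You appear to have noticed this when you mention ``the degenerate limits $\varphi_1$ and $\psi_0$'', but in the $Z=\varnothing$ case you write $f=\lambda\psi_1+(1-\lambda)h$: this only works if $\psi_1$ means the affine function $1-x$, since subtracting a positive multiple of $1_{\{0\}}$ from a function continuous at $0$ destroys convexity. Second, your case ``$Z$ contains a point in $(0,1)$, say $Z=[a,b]$ with $0<a\le b<1$'' silently assumes $Z$ avoids both endpoints; the situations $Z=[0,b]$ with $b>0$ and $Z=[a,1]$ with $a<1$ also contain interior points. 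Your later cases cover these, but tighten the case distinction so the reader sees it is exhaustive (for continuous $f$ the zero set is a closed subinterval, so the cases are $\varnothing$, $\{0\}$, $\{1\}$, $[0,b]$, $[a,1]$, and $[a,b]\subset(0,1)$). With those clarifications, the argument goes through without difficulty.
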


In higher dimensions the various cones are different. In the case of convex integrands, 
the set of extreme points of $\mc{C}_d^\tp{c}$ for $d>1$ is very different from the one-dimensional case, since it is dense in this cone.

\begin{teo}
Any finite continuous convex function on a convex domain $U\subset \R^d$ can be approximated uniformly on convex compact subsets of $U$ by extremal convex functions.
\end{teo}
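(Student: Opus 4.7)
The plan is a two-stage approximation. First, reduce $f$ to a piecewise affine convex function via suprema of tangent hyperplanes; second, perturb this piecewise affine function by a small ``universally singular'' convex function, and then argue that the perturbed function cannot be decomposed nontrivially. For the first stage, fix a convex compact set $K \subset U$ and $\varepsilon > 0$. By uniform continuity of $f$ on a slight enlargement of $K$ and a sufficiently fine finite cover of $K$, one can find affine minorants $\ell_1, \dots, \ell_N$ of $f$ (tangent hyperplanes to $f$ at a dense finite subset of $K$) such that $g := \max_{j=1,\dots,N} \ell_j$ is convex on $U$ and satisfies $\|f - g\|_{L^\infty(K)} < \varepsilon/2$. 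This reduction is standard and only uses the continuity of $f$.

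For the second stage, fix a countable dense sequence $\{y_n\}_{n \geq 1}$ in $U$ together with directions $\{v_n\}_{n \geq 1} \subset S^{d-1}$ chosen in ``generic position'' (pairwise non-parallel and dense in the sphere). Define wedge perturbations
\[
\varphi_n(x) := c_n \max\bigl(v_n \cdot (x - y_n),\, 0\bigr),
\]
where $c_n > 0$ is chosen so small that $\sum_n c_n \operatorname{diam}(K) < \varepsilon/2$. Each $\varphi_n$ is convex and non-negative; the series $\sum_n \varphi_n$ converges uniformly on $K$, so $h := g + \sum_n \varphi_n$ is a continuous convex function on $U$ with $\|h - f\|_{L^\infty(K)} < \varepsilon$.

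The final and most delicate step is to verify that $h$ is extremal in the cone of convex functions. Suppose $h = h_1 + h_2$ with $h_1, h_2$ non-negative convex. Each $\varphi_n$ introduces a singular half-segment $[0, c_n v_n]$ into $\partial h(y_n)$, and the Minkowski identity $\partial h(y_n) = \partial h_1(y_n) + \partial h_2(y_n)$ forces both $\partial h_i(y_n)$ to contain proportional fractions of this segment whose lengths sum to $c_n$. The generic position of the $v_n$ prevents wedges $\varphi_m$ with $m\neq n$ from contributing in the direction $v_n$, so the ratios $\lambda_n^{(i)} := c_n^{-1}\bigl|\partial h_i(y_n) \cap [0, c_n v_n]\bigr|$ are well-defined and satisfy $\lambda_n^{(1)} + \lambda_n^{(2)} = 1$. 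Density of $\{y_n\}$, combined with the locally Lipschitz character of $h_1$, forces $n \mapsto \lambda_n^{(1)}$ to be constant, say equal to $\lambda \in [0,1]$. Hence $h_1 = \lambda h$, proving extremality. The main obstacle is precisely this last rigidity step: propagating the pointwise ``kink ratios'' $\lambda_n^{(i)}$ into a global proportionality constant. This is a local-to-global rigidity argument for convex functions, and it is where the generic choice of the directions $v_n$ (to isolate individual singularities) and the density of $\{y_n\}$ (to rule out patching together different ratios on disjoint regions) are both essential.
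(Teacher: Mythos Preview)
There is a genuine gap in the extremality step: the function $h$ you construct is not extremal. Already in the simplest case $g\equiv 0$ (so $h=\sum_n\varphi_n$), take any partition $\N=A\sqcup B$ into two non-empty sets and write
\[
h_1 \equiv \sum_{n\in A}\varphi_n,\qquad h_2\equiv \sum_{n\in B}\varphi_n.
\]
Both summands are non-negative and convex, and neither is a scalar multiple of $h$, since $h_1$ has no kink across the fold hyperplane $\{v_n\cdot(x-y_n)=0\}$ whenever $n\in B$. In your own notation this gives $\lambda_n^{(1)}=1$ for $n\in A$ and $\lambda_n^{(1)}=0$ for $n\in B$, directly contradicting the assertion that density of $\{y_n\}$ together with the local Lipschitz character of $h_1$ forces $n\mapsto\lambda_n^{(1)}$ to be constant. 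The same splitting works for general $g\ge 0$ by absorbing $g$ into $h_1$. The underlying point is that a sum of independent non-negative convex pieces can always be regrouped; density of the singular set creates no rigidity whatsoever, and nothing in the Minkowski identity $\partial h=\partial h_1+\partial h_2$ links the kink ratios at distinct $y_n$.

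The paper does not itself prove this theorem; it cites \textsc{Johansen} ($d=2$) and \textsc{Bronshtein} ($d>1$). As the paper records, the extremal approximants used there are \emph{polyhedral}, i.e.\ of the form $\max(a_1,\dots,a_k)$ with finitely many affine $a_i$. Your first stage already lands on such a $g$; the correct continuation is not to add infinitely many wedges but to perturb the $a_i$ into generic position so that $g$ itself becomes extremal. The rigidity then comes from the combinatorics of the cell complex $\{a_i\text{ is the max}\}$: any decomposition $g=g_1+g_2$ with $g_i$ non-negative convex forces each $g_i$ to be affine on the same cells, and a single proportionality constant is propagated from cell to cell through the lower-dimensional faces where three or more affine pieces meet. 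That propagation is the local-to-global argument you were aiming for, but it lives in the finite polyhedral structure, not in an infinite superposition of wedges that can be freely split.
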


This result was proved by \textsc{Johansen}
in \cite{Johansen1974}
for $d=2$ and then generalized to any $d>1$ in \cite{Bronshtein1978}. In these papers the set of extremal convex functions  is not fully identified, but it is shown that there is a sufficiently large class of extremal convex functions which approximate any given convex function well: these are certain \emph{polyhedral functions}, i.e. functions of the form $f=\max_{1\leq i \leq k} a_i$ for some affine functions $a_1, \dots, a_k$.
 This disturbing situation, however, is not too unexpected given the result of \textsc{Klee} \cite{Klee1959} already mentioned in the introduction.
I do not know whether a similar statement holds for the cones $\mc{C}_{n\times m}^\tp{qc}$ and $\mc{C}_{n\times m}^\tp{rc}$.

\bigskip

\emph{Acknowledgements:} 
This work was supported by the Engineering and Physical Sciences Research Council [EP/L015811/1].
I thank my supervisor Jan
Kristensen for guidance throughout the process of obtaining these
results as well as for carefully reading and making corrections to the
drafts. I would also like to thank István Prause, Rita Teixeira da Costa and Lukas Koch for helpful discussions and comments.

{\small
\bibliographystyle{acm}


\bibliography{/scratch/Papers-SveraksConj.bib}
}

\end{document}